\theoremstyle{plain}
\newtheorem{theo}{Theorem}
\newtheorem{coroll}{Corollary}
\newtheorem{lemma}{Lemma}
\newcommand{\T}{\mathbf{T}}
\newcommand{\un}{\mathbf{1}}
\def\pentsup#1{\left\lceil #1 \right\rceil}
\def\pentinf#1{\left\lfloor #1 \right\rfloor}
\renewcommand{\L}{\mathbb{L}}
\renewcommand{\P}{\mathbb{P}}
\newcommand{\N}{\mathbb{N}}
\newcommand{\Z}{\mathbb{Z}}
\newcommand{\E}{\mathbb{E}}
\newcommand{\A}{\mathscr{A}}
\title[CFTP for exponentially ergodic one-dimensional PCA]{Coupling from the past for exponentially ergodic one-dimensional probabilistic cellular automata}
\author{Jean B\'erard}
\address{Institut de Recherche Math\'ematique Avanc\'ee, UMR 7501, Universit\'e de Strasbourg et CNRS, 7 rue Ren\'e Descartes, 67\,000 Strasbourg, France}
\email{jberard@unistra.fr}
\thanks{The author would like to thank Y. Spinka for useful discussions, and in particular for pointing the references \cite{Spi,Spi2}.}
\begin{document}
\begin{abstract}
For every exponentially ergodic one-dimensional probabilistic cellular automaton with positive rates, we construct a locally defined coupling-from-the-past flow whose coalescence time has a finite exponential moment. This construction leads to a finite-size necessary and sufficient condition for exponential ergodicity of one-dimensional cellular automata. As a corollary, we prove that every sufficiently small perturbation of an exponentially ergodic one-dimensional cellular automaton is exponentially ergodic. 
\end{abstract}

\maketitle

\section{Introduction}

\subsection{Definitions and the main result}

Probabilistic Cellular Automata (PCA) form a class of discrete-time Markov processes on spaces of the form $\A^{\L}$, where $\L$ is a lattice (typically, $\L=\Z^d$ for some $d \geq 1$), and $\A$ is a finite set called an alphabet (see e.g. \cite{TooVasStaMitKurPir} for a seminal reference on the subject, and \cite{FerLouNar} for a recent overview). In the present paper, we consider one-dimensional PCAs, that is, $\L=\Z$. An element of $\A^{\Z}$ is a bi-infinite sequence $(v(x))_{x \in \Z}$, where $v(x) \in \A$ for all $x \in \Z$, and we equip the set $\A^{\Z}$ with the product topology and product $\sigma-$algebra. The dynamics of the PCA is specified through a transition kernel $\mathcal{K}$ from $\A^{\{-1,0,1 \}}$ to $\A$, so that for every $\mathbf{v}=(v_{-1},v_0,v_1) \in  \A^{\{-1,0,1 \}}$, $\mathcal{K}(\mathbf{v},\cdot)$ is a probability measure on $\A$. Formally, a probabilistic cellular automaton with kernel $\mathcal{K}$ is a discrete-time Markov process $(X_t)_{t}$ on  $\A^{\Z}$, such that:
 \begin{equation}\label{e:PCA-dyn-1} \mbox{The random variables $\left(X_t(x)\right)_{x \in \Z}$ are independent given $X_{t-1}$},\end{equation}
 \begin{equation}\label{e:PCA-dyn-2} \forall  x \in \Z,  \    X_t(x) \sim \mathcal{K}( \pi_{\llbracket x-1,x+1 \rrbracket}(X_{t-1}) ,\cdot ), \end{equation}
where  $\llbracket x,y \rrbracket$ denotes the discrete interval $\llbracket x,y \rrbracket = \{ z \in \Z ; \ x \leq z \leq y \}$, and where, given two sets $I \subset J$, we denote by $\pi_I$ the canonical projection from $\A^J$ to $\A^I$.

Moreover, we say that our PCA satisfies the {\it positive rates} condition when there exists a $w \in \A$ such that \begin{equation}\label{e:positive-rates}\min_{\mathbf{v} \in \A^{\{-1,0,1 \}}}  \mathcal{K}(\mathbf{v},\{w\}) > 0.\end{equation}

One key question about the long-term dynamics of PCAs is that of ergodicity: we say that a PCA is {\it ergodic} when there exists a (necessarily unique) probability distribution $\mu$ on $\A^{\Z}$ such that, for every initial condition $X_0=\xi \in \A^{\Z}$, one has the convergence  $X_t \xrightarrow[t \to +\infty]{d} \mu$. For an ergodic PCA, an important additional question is that of the convergence speed: we say that a PCA is {\it exponentially ergodic} when there exist positive constants $a,b > 0$ such that, for all $t \geq 0$, all $\xi \in \A^{\Z}$, and all $I \subset \Z$, 
\begin{equation}\label{e:expo-ergod}d_{\mbox{\scriptsize TV}} \left(\mbox{Law}(\pi_I(X_t)), \pi_I(\mu) \right) \leq a  | I | e^{- bt},\end{equation}
where $d_{\mbox{\scriptsize TV}}$ denotes the total variation distance between probability measures (see Section \ref{s:couplings}), and where, for a probability measure $\nu$ on $\A^J$ with $I \subset J$, we denote by $\pi_I(\nu)$ the corresponding image probability measure on $\A^I$.

Among the various methods that may be used to prove ergodicity, we focus on the so-called {\it coupling from the past} (CFTP) approach, which has become a popular tool in the context of Markov-chain based numerical methods (see \cite{ProWil}), but had already been used earlier (not under this specific name) to establish ergodicity for a variety of processes -- see e.g. \cite{TooVasStaMitKurPir} in the context of PCAs, or \cite{Lig} in the context of continuous-time interacting particle systems. 

To formalize this approach, we define a CFTP {\it flow} to be a family of random functions\footnote{We use the space $\mathscr{S}$ of functions $\phi \ : \ \A^{Z} \to \A^{\Z}$ for which there exists an $r \geq 1$ such that the value of $\phi(\xi)$ at site $x$ is a function of those values of $\xi(y)$ for which $|y-x| \leq r$ only. Measurability on $\mathscr{S}$ is then defined by viewing elements of $\mathscr{S}$ as countable collections of functions from $\A^{\llbracket x-r, x+r \rrbracket}$ to $\A$.} $ \left(\Phi_{t_n}^{t_{n+1}} \right)_{n \geq 0}$, where $(t_n)_{n \geq 0}$ is a decreasing integer-valued sequence such that $t_0=0$, and
where $\Phi_{t_n}^{t_{n+1}} \ : \ \A^{\Z} \to \A^{\Z}$ is such that, for all $n \geq 1$, and all $\xi \in \A^{\Z}$, the sequence $\xi, \Phi_{t_{n-1}}^{t_n}(\xi), \ldots,  \Phi_{t_{0}}^{t_1} \circ  \cdots \circ \Phi_{t_{n-1}}^{t_n}(\xi)$,
has the same distribution as $X_{t_n},X_{t_{n-1}},\ldots, X_{t_0}$, starting from $X_{t_n}=\xi$. The {\it coalescence time} of the flow at a site $x \in \Z$ is then defined as $$T_x = \inf \{n \geq 1 ; \ \pi_x \circ \Phi_{t_{0}}^{t_1}  \circ \cdots \circ \Phi^{t_n}_{t_{n-1}} \mbox{ is a constant function} \},$$ with the convention $\inf \emptyset = +\infty$, and where we use the notation $\pi_x$ instead of $\pi_{\{x\}}$ when $I=\{x\}$. If for all $x$, one has that $T_x<+\infty \mbox{ a.s.}$, then the PCA is ergodic. Moreover, if the tail of $T_x$ satisfies an inequality of the form $\P(T_x > t) \leq a e^{-bt}$ for all $t \geq 0$ (with $a$ and $b$ not depending on $x$), one gets the bound \eqref{e:expo-ergod} with the same constants $a$ and $b$.

Our main result is a converse to this property. It states that, whenever a PCA is exponentially ergodic and has positive rates, it is possible to define a CFTP flow for which $T_x$ has a finite exponential moment (uniformly bounded over $x$), and which is, in a precise sense, locally defined. 
\begin{theo}\label{t:CFTP}
Consider an  exponentially ergodic one-dimensional PCA with positive rates. Then there is a CFTP flow with $t_n=-n \cdot L$ for a certain integer $L$, enjoying the following properties:
\begin{itemize}
\item[(i)] for all $x \in \Z$ and $t \geq 0$, $\P(T_x>t) \leq c e^{-dt}$ where $c>0$ and $d>0$ do not depend on $x$;
\item[(ii)] the family of random functions $\left(\Phi_{t_n}^{t_{n+1}}\right)_{n \geq 0}$ is i.i.d.;
\item[(iii)] there exists an i.i.d. family of random variables $(V_y)_{y \in \textstyle{\frac{1}{2}}\Z}$, and a (measurable) function $F$, such that,  for all $x \in \Z$ and $\xi \in \A^{\Z}$, one can write the value of $\Phi_{t_0}^{t_1}(\xi)$ within $\llbracket x(2L)-L,x(2L)+L \rrbracket$ as:
$$\pi_{\llbracket x(2L)-L,x(2L)+L \rrbracket} \left(\Phi_{t_0}^{t_1} (\xi)\right) = F\left(V_{x-1/2},V_x,V_{x+1/2},\pi_{\llbracket (x-1)(2L),(x+1)2L \rrbracket}(\xi) \right).$$
\end{itemize}
\end{theo}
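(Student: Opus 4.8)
The plan is to reduce everything to the construction of a single i.i.d.\ block map $\Phi \egaldef \Phi_{t_0}^{t_1}$ realizing the automaton over a time window of length $L$, the integer $L$ being chosen large only at the very end using exponential ergodicity. Properties (ii) and (iii) will be arranged almost for free from the way the randomness driving $\Phi$ is organized, so that the entire difficulty is concentrated in the coalescence tail (i).

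First I would turn the positive rates condition into a grand coupling. Writing $\varepsilon \egaldef \min_{\mathbf{v}} \mathcal{K}(\mathbf{v},\{w\}) > 0$, one decomposes $\mathcal{K}(\mathbf{v},\cdot) = \varepsilon\,\delta_w + (1-\varepsilon)\mathcal{K}'(\mathbf{v},\cdot)$ and drives the automaton by an i.i.d.\ family of uniform variables, one per space-time site, updating every initial condition simultaneously: whenever the variable at $(z,s)$ falls below $\varepsilon$ the site is \emph{refreshed} to $w$ irrespective of its neighbours, and otherwise it is sampled from $\mathcal{K}'$. This makes the flow i.i.d.\ across blocks, giving (ii). For (iii), I would note that the range-$1$ interaction makes the value of $\Phi(\xi)$ at a site $z$ depend on $\xi$ and on the driving variables only inside the backward light cone of $(z,0)$ down to time $-L$; taking the union over $z \in \llbracket 2Lx-L,2Lx+L\rrbracket$ this cone is contained in $\llbracket (x-1)2L,(x+1)2L\rrbracket$ at the bottom, which already yields the asserted input window. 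The driving variables falling in the interior of a single block would be collected into the integer-indexed variable $V_x$, while those in the overlap of two neighbouring cones, which sit around the interface $2Lx+L$, would be collected into the half-integer variable $V_{x+1/2}$; this is exactly the decomposition $(V_{x-1/2},V_x,V_{x+1/2})$ required in (iii).

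The heart of the argument is (i). For fixed driving randomness, let $I_s(x)\subseteq\Z$ be the set of sites at time $-s$ whose value influences $X_0(x)$; this \emph{dual influence region} is contained in an interval of width at most $2s+1$ and shrinks as refreshes cut off dependencies. By definition $T_x>n$ exactly when $I_{nL}(x)\neq\emptyset$, and since $y\in I_{nL}(x)$ forces a single-site change at $(y,-nL)$ to change $X_0(x)$, a union bound gives
\[
\P(T_x>n)\;\le\;\sum_{|y-x|\le nL}\P\big(\text{flipping }\xi(y)\text{ at time }-nL\text{ changes }X_0(x)\big).
\]
It therefore suffices to prove a single-site influence bound $\P(\text{a flip at }(y,-s)\text{ reaches }(x,0))\le Ce^{-b's}$, uniformly in $x,y$; combined with the width $2nL+1$ of the region this yields $\P(T_x>n)\le c\,e^{-dn}$, the uniformity in $x$ being automatic by translation invariance. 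This is where exponential ergodicity enters: the two configurations differing only at $y$ produce marginal laws at $(x,0)$ whose total variation distance is at most $2a\,e^{-bs}$ by \eqref{e:expo-ergod}, and $L$ is chosen so that the per-block decay dominates the width factor.

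The main obstacle, and the point where genuine care is needed, is precisely this single-site influence bound, because \eqref{e:expo-ergod} controls the \emph{optimal} coupling of the two marginals whereas $I_s(x)$ is governed by the \emph{specific} Markovian grand coupling we have fixed, which a priori may decorrelate strictly more slowly. The plan is to build the per-step update not as an arbitrary refresh coupling but as one whose pairwise disagreement is itself controlled at the ergodic rate: on each finite light-cone window one realizes, through the i.i.d.\ variables, a near-optimal coupling of the two runs, using the refreshes only to guarantee independence across blocks and to seal the interfaces. Concretely I expect to run a coupling-from-the-past on each window of width $O(L)$, whose finitary coding radius has exponential tails (this is where the circle of finitary-coding ideas underlying \cite{Spi,Spi2} enters), and to check that these window couplings can be made simultaneously consistent and block-local, so that the resulting $\Phi$ is genuinely of the form demanded by (iii). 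Once this bridge between marginal convergence and grand-coupling coalescence is in place, the remaining estimates---the union bound, the choice of $L$, and the geometric decay in the number of blocks---are routine.
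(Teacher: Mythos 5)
Your reduction of (i) to a single\hyphen site influence bound for the grand coupling is exactly where the proof breaks, and you have correctly located but not closed the gap. Two distinct obstructions are being conflated. First, the event $\{T_x>n\}$ is that $\pi_x\circ\Phi_{t_0}^{t_1}\circ\cdots\circ\Phi_{t_{n-1}}^{t_n}$ is non-constant as a function of \emph{all} initial configurations simultaneously; your union bound over sites $y$ still leaves, for each $y$, a supremum over the exponentially many configurations $\xi$ on the light cone in which the flip is performed, and the bound \eqref{e:expo-ergod} controls only a \emph{fixed} pair of initial conditions. Second, even for a fixed pair, \eqref{e:expo-ergod} bounds the total variation distance of the marginals, i.e.\ the disagreement probability of the \emph{optimal} pairwise coupling, whereas your refresh-based grand coupling is a specific coupling that may propagate discrepancies at a much slower (or non-) decaying rate; the positive-rates refresh mechanism alone gives coalescence only in a high-noise regime, not under the bare hypothesis of exponential ergodicity. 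Your proposed repair (``realize a near-optimal coupling of the two runs on each window'') does not resolve either issue: turning TV closeness of the family $\{\mbox{Law}(\pi_I(X_t))\,:\,\xi\in\A^{\Z}\}$ into a \emph{grand} coupling that coalesces costs a factor of $|\A|^{|I|}$ (this is the content of Lemma \ref{l:couplage-dessous}), which forces the space-time windows on which such couplings exist to be tall and thin ($\mbox{height}\geq\alpha\cdot\mbox{width}$ with $\alpha>\log|\A|/b$), and such windows cannot tile space-time.

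This is precisely why the paper's argument is long: after the elementary Lemma \ref{l:debut} (your ``near-optimal window coupling,'' valid only for tall trapezoids), it needs the nested self-similar trapezoids of Lemmas \ref{l:self-sim} and \ref{l:aplati} to manufacture couplings on \emph{flat} trapezoids whose non-coalescence probability is still $e^{-L^{1+o(1)}}$, and then Lemma \ref{l:bc-coupl} to handle the upward trapezoids with \emph{boundary conditions} that fill the gaps left between the downward pieces --- the step that genuinely exploits one-dimensionality. Only then does the $\T_a/\T_b$ tiling and the branching estimate $\E|P_{n+1}|\leq\rho\,\E|P_n|$ deliver (i). Your treatment of (ii) and (iii) is in the right spirit (the light-cone computation giving the window $\llbracket(x-1)2L,(x+1)2L\rrbracket$ is correct), but as it stands the proposal defers the entire mathematical content of the theorem to an unproved bridge between marginal mixing and grand-coupling coalescence.
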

Property (i) merely states the exponential bound on the tail of the coalescence time. Property (ii) is a locality property of the flow with respect to time: the flow is defined on a regular time-grid with mesh $L$, with an i.i.d. structure over distinct time cells. Finally, property (iii) is a locality property with respect to space: over a grid with mesh $2L$, the flow only involves the value of the initial condition and an auxiliary i.i.d. structure within a bounded window.

The conclusion of Theorem \ref{t:CFTP} is already known to hold, under a stronger form, in the case of a {\it monotone} PCA (i.e. when the kernel $\mathcal{K}$ is stochastically monotone with respect to a total order on $\A$ and the corresponding partial product order on $\A^{\{-1,0,1\}}$), as observed in \cite{vdBSte}. In such a case, ergodicity alone is enough to guarantee the existence of a CFTP flow, one can take $t_n=-n$, and $\pi_x((\Phi_{0}^{-1}(\xi))$ can be written as $F(V_x,\pi_{\llbracket x-1,x+1\rrbracket}(\xi))$; moreover, the tail of the coalescence time precisely matches the actual speed of convergence to the limiting distribution.

Still, to our knowledge, a result as general as Theorem \ref{t:CFTP} -- where no other assumption beyond exponential ergodicity and positive rates is needed -- is new, and, except in the monotone case just discussed, only sufficient (but not necessary) conditions for the existence of such a CFTP flow were known (see e.g. \cite{BusMaiMar, MarSabTaa}). Moreover, it is still an open question (see Problems 6.1 and 6.2 in \cite{MarSabTaa}) whether ergodic but not exponentially ergodic PCA exist, so Theorem \ref{t:CFTP} can in fact be applied to every known example of an ergodic one-dimensional PCA. 

\subsection{Consequences}

A direct consequence of Theorem \ref{t:CFTP} is the existence of an algorithm to perfectly sample from the invariant distribution $\mu$ of any exponentially ergodic one-dimensional PCA with positive rates. Also, using the results\footnote{Note that, in \cite{Spi}, the term "exponentially ergodic PCA" is used to refer to the existence of a suitable CFTP flow, whereas in the present paper, exponential ergodicity is a mixing property from which we have to deduce the existence of the CFTP flow.} in \cite{Spi}, we deduce that, if  $X \sim \mu$, the joint distribution of $\left(\pi_{x(2L),(x+1)2L}(X)\right)_{x \in \Z}$ admits a representation as a finite factor of a finite-valued i.i.d. process (it is unclear whether this can be strengthened to prove that $\mu$ itself enjoys this property).

Next, we observe that the flow constructed in the proof of Theorem \ref{t:CFTP} leads to a {\it finite-size} necessary and sufficient condition for exponential ergodicity (with positive rates). Specifically, the proof shows that, assuming positive rates, exponential ergodicity is equivalent to the existence of an integer $L \geq 1$ such that \begin{equation}\label{e:finite-size-cond}\rho=(4L+1)\P \left( \pi_{\llbracket -L,L \rrbracket}  \circ \Phi_{0}^{-L}  \mbox{ is a constant function}\right)<1.\end{equation}  As a consequence, at least in principle, the property of being an exponentially ergodic PCA can always be checked using an algorithm that explores larger and larger values of $L$ (and of the other relevant parameters used in the construction), and stops when a value of $\rho<1$ has been found. Another consequence of this finite-size condition is that exponential ergodicity (with positive rates) is a robust property with respect to sufficiently small perturbations of the dynamics, as stated in the following corollary.

\begin{coroll}\label{c:perturb}
If the kernel $\mathcal{K}$ defines an exponentially ergodic one-dimensional PCA with positive rates, it is also the case of any kernel $\mathcal{K}'$ that is a sufficiently small perturbation of $\mathcal{K}$.
\end{coroll}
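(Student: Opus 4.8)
The plan is to reduce the infinite-volume statement to the finite-size criterion \eqref{e:finite-size-cond}, which characterizes exponential ergodicity (under positive rates) by a single strict inequality $\rho < 1$ at a fixed finite scale $L$, and then to exploit the fact that this inequality is stable under small perturbations of the kernel. First I would apply the forward direction of the equivalence to $\mathcal{K}$: since it defines an exponentially ergodic PCA with positive rates, there is an integer $L \geq 1$ with
\[
\rho(\mathcal{K}) = (4L+1)\,\P\!\left(\pi_{\llbracket -L,L\rrbracket}\circ\Phi_0^{-L}\ \text{is a constant function}\right) < 1 .
\]
I fix this $L$ once and for all. The crucial point is that, at fixed $L$, this probability is a genuinely finite-volume quantity: the random map $\pi_{\llbracket -L,L\rrbracket}\circ\Phi_0^{-L}$, and the event that it is constant, are determined by the finitely many local updates occurring inside a bounded space-time window (spatial width $O(L)$, temporal depth $L$), each update being governed by $\mathcal{K}$ together with the auxiliary i.i.d. randomness of the construction.

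The hard part will be to show that, for this fixed $L$, the map $\mathcal{K} \mapsto \rho(\mathcal{K})$ is continuous, say with respect to $\varepsilon(\mathcal{K},\mathcal{K}') := \max_{\mathbf{v}} d_{\mathrm{TV}}\!\left(\mathcal{K}(\mathbf{v},\cdot),\mathcal{K}'(\mathbf{v},\cdot)\right)$. I would do this by a coupling argument in which the flows built from $\mathcal{K}$ and from $\mathcal{K}'$ are realized on a common probability space, feeding the same auxiliary randomness into both constructions, so that they differ only through the local update rules. For each site-time in the window and each of the finitely many possible local inputs $\mathbf{v} \in \A^{\{-1,0,1\}}$, the two update rules can be coupled to produce different outputs with probability at most a constant times $\varepsilon(\mathcal{K},\mathcal{K}')$. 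A union bound over the finitely many site-times and inputs then shows that, off an event of probability at most $N(L)\,\varepsilon(\mathcal{K},\mathcal{K}')$ for some finite $N(L)$, the two restricted maps coincide as functions of the initial condition, whence the events that they are constant coincide. This yields $|\rho(\mathcal{K}) - \rho(\mathcal{K}')| \leq (4L+1)\,N(L)\,\varepsilon(\mathcal{K},\mathcal{K}')$, which tends to $0$ as $\mathcal{K}' \to \mathcal{K}$. The one subtlety requiring care is to quantify over all possible local inputs $\mathbf{v}$ rather than only the realized one, so that the bad event depends only on the shared auxiliary randomness and the union bound is legitimate.

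Finally, I would conclude by observing that both hypotheses needed for the reverse direction of the criterion are open conditions. The positive-rates condition \eqref{e:positive-rates} holds for $\mathcal{K}$ with a strict margin $\delta > 0$ at some symbol $w$, hence persists for every $\mathcal{K}'$ with $\varepsilon(\mathcal{K},\mathcal{K}') < \delta$; and by the continuity just established, $\rho(\mathcal{K}') < 1$ for all $\mathcal{K}'$ with $\varepsilon(\mathcal{K},\mathcal{K}')$ small enough. For any such $\mathcal{K}'$, the reverse direction of \eqref{e:finite-size-cond} then guarantees that $\mathcal{K}'$ defines an exponentially ergodic PCA with positive rates, which is the desired conclusion.
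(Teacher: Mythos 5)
Your overall strategy --- reduce to the finite-size criterion \eqref{e:finite-size-cond} at a fixed scale $L$, and show that the relevant finite-volume quantity moves by $O(\varepsilon)$ under an $\varepsilon$-perturbation of the kernel --- is the right one, and it is in substance what the paper does. There is, however, a genuine gap in your continuity step. You treat $\Phi_0^{-L}$ as if it were generated by per-site local update rules (one random function $\A^{\{-1,0,1\}}\to\A$ per space-time point), so that coupling the $\mathcal{K}$- and $\mathcal{K}'$-updates site by site and taking a union bound over site-times and inputs $\mathbf{v}$ would control the probability that the two flows differ. But the flow of Theorem \ref{t:CFTP} is not of this form: inside the sub-trapezoids it uses the type I and type II couplings of Lemmas \ref{l:couplage-dessous} and \ref{l:couplage-dessus}, which are \emph{global} couplings of the laws $G(\zeta,\T)$ indexed by the entire top configuration $\zeta$, built from a single uniform variable and an interval partition that depends on the kernel through these induced trapezoid distributions, not through a per-site product structure. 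Worse, ``re-running the construction for $\mathcal{K}'$ with the same auxiliary randomness'' is circular: the input to Lemmas \ref{l:debut}--\ref{l:bc-coupl} is the exponential ergodicity bound \eqref{e:expo-ergod} for the kernel in question, which is precisely what you do not yet know for $\mathcal{K}'$. So ``$\rho(\mathcal{K}')$'' is not canonically defined by the kernel alone; you must first exhibit a valid coupling of the $\mathcal{K}'$-dynamics within each block and then prove that it coalesces with probability close to that of the $\mathcal{K}$-flow.

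The paper closes exactly this gap, and does so without opening the black box of the coupling: for $\mathcal{K}'$ satisfying $\mathcal{K}'(\mathbf{v},\{w\})\geq(1-\epsilon)\mathcal{K}(\mathbf{v},\{w\})$ everywhere, it writes $\mathcal{K}'=(1-\epsilon)\mathcal{K}+\epsilon\mathcal{K}''$ with $\mathcal{K}''$ a transition kernel, colours each site of a block red with probability $\epsilon$ independently, uses the block coupling of Theorem \ref{t:CFTP} verbatim on the all-blue event, and otherwise falls back to the basic coupling with $\mathcal{K}''$ at red sites. This is a genuine coupling of the $\mathcal{K}'$-dynamics that coincides with the $\mathcal{K}$-flow with probability $(1-\epsilon)^m$, where $m$ is the number of sites in a block, and the recursion on the sets $P_n$ then contracts as soon as $(4L+1)\left((1-\epsilon)^m p+1-(1-\epsilon)^m\right)<1$, which holds for $L$ large and then $\epsilon$ small. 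Your observations about the openness of the positive rates condition and the need to quantify over all inputs are correct; if you replace your site-by-site coupling of the two constructions by this mixture/colouring device (or by an explicit maximal-coupling correction of the block maps, with a union bound over the finitely many top configurations at fixed $L$), the argument becomes complete.
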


\subsection{Discussion}

Theorem \ref{t:CFTP} holds for exponentially ergodic one-dimensional PCAs, and it is indeed a natural question whether an analogous result holds in dimension $d \geq 2$. 

One place\footnote{But not necessarily the only place, see also Lemma \ref{l:debut}.} where the proof of Theorem \ref{t:CFTP} seems to rely heavily on the one-dimensional setting is Lemma \ref{l:bc-coupl}, where we show that exponential ergodicity implies the existence of a coupling with good coalescence properties for the dynamics with boundary conditions. The proof of the lemma uses the fact that the number of sites within a fixed distance of the boundary of a $d-$dimensional box does not grow with the size of the box, which is specific to $d=1$. (This is reminiscent of the proof in \cite{MarOliSch} that "weak mixing implies strong mixing for squares" in the context of two-dimensional spin systems, where here we have one dimension of space and one of time instead of two dimensions of space.) Using stronger mixing conditions (involving the dynamics with boundary conditions) may allow to extend the conclusion of Theorem \ref{t:CFTP} to dimensions $d \geq 2$, but it is unclear how such mixing conditions could be related to more familiar ones in the context of PCAs such as \eqref{e:expo-ergod}. Note that, in the distinct but related context of Markov random fields, the use of "strong" mixing conditions to build CFTP structures and/or perfect simulation algorithms is an active research topic (see e.g. \cite{Spi2, AnaJer}, and the references therein).

Another interesting extension would be to the case of (continuous-time) interacting particle systems, for which the deterministic bound on the speed of propagation of information in PCA dynamics does not hold.

\subsection{Organization of the paper}

The paper is essentially self-contained. Section \ref{s:couplings} contains definitions and simple but useful results on couplings (no claim at originality is made there). Section \ref{s:trapezoid} is devoted to definitions related to PCA dynamics within trapezoids, which are heavily used in the subsequent proofs. Section \ref{s:proof} contains a succession of lemmas leading to the proof of Theorem \ref{t:CFTP} and Corollary \ref{c:perturb}.

\section{Couplings}\label{s:couplings}

Given a finite set $S$ and a finite family $(\nu_e)_{e \in E}$ of probability distributions on $S$, a {\it coupling} of $(\nu_e)_{e \in E}$ is a family of $S-$valued random variables $(Z_e)_{e \in E}$, such that $Z_e \sim \nu_e$ for all $e \in E$. Alternatively, we may view such a coupling as a random map $\Psi \ : \ E \to S$, where $\Psi(e)=Z_e$.

Given two probabilities $\nu_1, \nu_2$ on $S$,  remember the definition of the total variation distance $d_{\mbox{\scriptsize TV}}(\nu_1,\nu_2) = \frac{1}{2} \sum_{s \in S} |\nu_1(s) - \nu_2(s)|$. It is a classical result that the total variation distance is the minimum value of $\P(Z_1=Z_2)$ over all couplings of $\nu_1,\nu_2$. The following two lemmas provide two useful variations over this kind of result.

\begin{lemma}\label{l:couplage-dessous}
Assume that, for a certain $0 < \epsilon < 1$, there exists an $e_0 \in E$ such that one has $d_{\mbox{\normalfont \scriptsize TV}} (\nu_e,\nu_{e_0}) \leq \epsilon/|S|$ for all $e \in E$. Then, for all $ \gamma \in ]\epsilon,1[$, there  
exists a coupling of $(\nu_e)_{e \in E}$ such that $\P(Z_e=Z_{e_0} \mbox{ for all $e \in E$}) \geq (1-\gamma)(1-\epsilon/\gamma)$.
\end{lemma}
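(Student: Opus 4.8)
The plan is to build the coupling from a single common "minorant" component, chosen so that its total mass is at least $(1-\gamma)(1-\epsilon/\gamma)$; on the event that this common component is used, all the $Z_e$ take one and the same value, and in particular all equal $Z_{e_0}$. Write $\mu=\nu_{e_0}$ for the reference distribution. The first observation I would record is that the hypothesis gives a \emph{one-sided} pointwise bound: since $d_{\mbox{\scriptsize TV}}(\nu_e,\mu)=\sum_{s}(\mu(s)-\nu_e(s))^+$, each atom satisfies $\mu(s)-\nu_e(s)\le \epsilon/|S|$ for every $e\in E$. It is important here to use this one-sided bound rather than the cruder $|\mu(s)-\nu_e(s)|\le 2\epsilon/|S|$, since the extra factor $2$ would spoil the constants.

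Next I would isolate the atoms on which $\mu$ is not too small. Fixing $\gamma\in(\epsilon,1)$, I set the threshold $\tau=\epsilon/(\gamma|S|)$ and let $G=\{s\in S:\mu(s)\ge\tau\}$. Two facts follow immediately. First, the complement carries little mass: $\mu(G^c)=\sum_{s\in G^c}\mu(s)\le |S|\,\tau=\epsilon/\gamma$, so $\mu(G)\ge 1-\epsilon/\gamma$. Second, on $G$ the reference distribution dominates a $(1-\gamma)$-fraction of itself below every $\nu_e$: for $s\in G$ and any $e$, the one-sided bound gives $\nu_e(s)\ge\mu(s)-\epsilon/|S|=\mu(s)-\gamma\tau\ge\mu(s)-\gamma\mu(s)=(1-\gamma)\mu(s)$.

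With these in hand I would define the common sub-probability measure $c(s)=(1-\gamma)\,\mu(s)\,\mathbbm{1}_{G}(s)$, of total mass $p:=\sum_s c(s)=(1-\gamma)\mu(G)$. The previous step guarantees $c(s)\le\nu_e(s)$ for all $s$ and all $e$, so the residuals $r_e=\nu_e-c$ are non-negative measures, each of total mass $1-p>0$. The coupling is then the usual mixture: with probability $p$ draw a single $Z\sim c/p$ and set $Z_e=Z$ for all $e$; with probability $1-p$ draw the $Z_e$ independently with $Z_e\sim r_e/(1-p)$. A one-line check of marginals shows $Z_e\sim\nu_e$, and on the first event (of probability $p$) all the $Z_e$ coincide, hence all equal $Z_{e_0}$. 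Therefore $\P(Z_e=Z_{e_0}\text{ for all }e)\ge p=(1-\gamma)\mu(G)\ge(1-\gamma)(1-\epsilon/\gamma)$, as required.

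I do not expect a serious obstacle here; the construction is essentially forced. The only delicate point is bookkeeping the constants so that the threshold $\tau=\epsilon/(\gamma|S|)$ simultaneously makes $\mu(G^c)$ small enough (this is where the $|S|$ in the hypothesis is consumed) and keeps $(1-\gamma)\mu$ below every $\nu_e$ on $G$ (this is where the one-sided total-variation bound and the choice of the factor $1-\gamma$ must match). The role of the constraint $\gamma\in(\epsilon,1)$ is exactly to keep both factors $1-\gamma$ and $1-\epsilon/\gamma$ positive, so that the resulting bound is nontrivial.
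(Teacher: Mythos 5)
Your proof is correct and follows essentially the same route as the paper: the same threshold $\epsilon/(\gamma|S|)$ defining the set of non-small atoms, the same common minorizing component $(1-\gamma)\nu_{e_0}$ restricted to that set, and the same final bound. The only difference is presentational (a mixture of a common component and residuals, versus the paper's explicit partition of $[0,1]$ into subintervals), which changes nothing of substance.
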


\begin{proof}
Let $A = \{ s\in S ; \ \nu_{e_0}(s) \leq \epsilon/(\gamma |S|) \}$. One has that $\nu_{e_0}(A) = \sum_{s \in A} \nu_{e_0}(s) \leq |A | \cdot \epsilon / (\gamma |S|) \leq \epsilon/\gamma$. Now, if $s \in A^c$, one has that  $ \nu_{e_0}(s) \geq \epsilon/(\gamma |S|)$, so that, since
$|\nu_{e}(s) -  \nu_{e_0}(s)| \leq  d_{\mbox{\scriptsize TV}} (\nu_{e},\nu_{e_0}) \leq \epsilon/|S|$, one has $\nu_{e}(s) \geq  \nu_{e_0}(s) - \epsilon/|S| \geq (1-\gamma) \nu_{e_0}(s)$. Now consider a pairwise disjoint family $(\mathfrak{I}(s))_{s \in A^c}$ of subintervals of $[0,1]$, with respective lengths $(1-\gamma) \nu_{e_0}(s)$, Then, for every $e \in E$, complete these intervals into a partition of $[0,1]$ by adding pairwise disjoint intervals $(\mathfrak{L}_e(s))_{s \in A^c}$, with respective lengths $\nu_{e}(s)-(1-\gamma) \nu_{e_0}(s)$, and pairwise disjoint intervals $(\mathfrak{K}_e(s))_{s \in A}$, with respective lengths  $\nu_{e}(s)$. Now consider a random variable $U$ with uniform distribution on $[0,1]$.
Whenever $U$ belongs to the interval $\mathfrak{I}(s)$, we set $Z_e=s$ for all $e \in E$. When $U$ does not belong to $\bigcup_{s \in A^c} \mathfrak{I}(s)$, for a given $e$, either $U$ belongs to a (unique) interval $\mathfrak{L}_e(s)$, or to a (unique) interval $\mathfrak{K}_e(s)$, and we define $Z_e$ as precisely the corresponding $s$. It is now apparent that each $Z_e$ has $\nu_{e}$ as its distribution, while 
$ \P(Z_{e_0}=Z_e  \mbox{ for all $e \in E$}) \geq \P(U \in \bigcup_{s \in A^c} \mathfrak{I}(s)) = \sum_{s \in A^c} (1-\gamma) \nu_{e_0}(s) = (1-\gamma) \nu_{e_0}(A^c) \geq (1-\gamma)(1-\epsilon/\gamma)$.

\end{proof}

\begin{lemma}\label{l:couplage-dessus}
Assume that, for a certain $0 < \epsilon < 1$, there exists an $e_0 \in E$ such that one has $d_{\mbox{\normalfont \scriptsize TV}} (\nu_e,\nu_{e_0}) \leq \epsilon$ for all $e \in E$. Then there exists a coupling of $(\nu_e)_{e \in E}$ such that, for all $J \subset E$,  $\P(Z_e=Z_{e_0} \mbox{ for all $e \in J$}) \geq 1-|J| \epsilon$.
\end{lemma}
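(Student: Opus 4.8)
The plan is to realize the entire family $(Z_e)_{e\in E}$ as deterministic functions of a single uniform random variable $U$ on $[0,1]$, built so that each pair $(Z_e,Z_{e_0})$ is a maximal coupling of $\nu_e$ and $\nu_{e_0}$. Once this is achieved, the conclusion follows from a union bound: each $e$ will satisfy $\P(Z_e\neq Z_{e_0})=d_{\mbox{\scriptsize TV}}(\nu_e,\nu_{e_0})\leq\epsilon$, so that for any $J\subset E$,
$$\P(\exists\, e\in J:\ Z_e\neq Z_{e_0})\leq\sum_{e\in J}\P(Z_e\neq Z_{e_0})\leq|J|\,\epsilon,$$
which is exactly the stated bound. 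The essential point is that sharing the same $U$ across all indices produces one consistent joint law rather than merely a collection of pairwise couplings, each of which would otherwise carry its own incompatible copy of $Z_{e_0}$.

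To carry out the construction, I would proceed as in the proof of Lemma \ref{l:couplage-dessous}. Partition $[0,1]$ into pairwise disjoint intervals $(\mathfrak{I}(s))_{s\in S}$ of respective lengths $\nu_{e_0}(s)$, and set $Z_{e_0}=s$ whenever $U\in\mathfrak{I}(s)$, which gives $Z_{e_0}\sim\nu_{e_0}$. For each fixed $e\in E$ and each $s$, reserve inside $\mathfrak{I}(s)$ a subinterval $\mathfrak{A}_e(s)$ of length $\min(\nu_e(s),\nu_{e_0}(s))$ (which fits, since this quantity is at most $\nu_{e_0}(s)=|\mathfrak{I}(s)|$) and set $Z_e=s$ there; on this subinterval $Z_e=s=Z_{e_0}$, so the two variables agree. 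The leftover region $[0,1]\setminus\bigcup_s\mathfrak{A}_e(s)$ has total length $1-\sum_s\min(\nu_e(s),\nu_{e_0}(s))=d_{\mbox{\scriptsize TV}}(\nu_e,\nu_{e_0})$, while the residual masses $(\nu_e(s)-\nu_{e_0}(s))^+$ sum to exactly the same quantity; I would therefore partition the leftover region into intervals of these lengths and set $Z_e$ to the corresponding $s$ on each.

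Checking the two requirements is then routine. The marginal law of $Z_e$ is correct because the total length on which $Z_e=s$ equals $\min(\nu_e(s),\nu_{e_0}(s))+(\nu_e(s)-\nu_{e_0}(s))^+=\nu_e(s)$. Moreover $\P(Z_e=Z_{e_0})\geq\P\big(U\in\bigcup_s\mathfrak{A}_e(s)\big)=\sum_s\min(\nu_e(s),\nu_{e_0}(s))=1-d_{\mbox{\scriptsize TV}}(\nu_e,\nu_{e_0})\geq 1-\epsilon$. Since all the $Z_e$ are built from the single variable $U$, the collection $(Z_e)_{e\in E}$ is a genuine coupling of $(\nu_e)_{e\in E}$, and the union bound above yields the claimed inequality for every $J\subset E$.

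I do not expect a serious obstacle here; the only delicate point is conceptual rather than computational, namely insisting on one shared source of randomness so that a single random variable $Z_{e_0}$ serves simultaneously as the common target for all indices. This is precisely what upgrades the elementary pairwise maximal couplings into the joint coupling needed to take the union bound over an arbitrary subset $J$.
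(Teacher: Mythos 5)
Your proof is correct and follows essentially the same route as the paper's: a single uniform variable $U$ drives all the $Z_e$ simultaneously, each pair $(Z_e,Z_{e_0})$ is realized as a maximal coupling via subintervals of total length $\min(\nu_e(s),\nu_{e_0}(s))$ nested in the partition for $\nu_{e_0}$, and the conclusion follows by a union bound over $J$. The only difference from the paper is notational (you use $\min$ and positive parts where the paper splits into the set $A_e$ and its complement), so there is nothing to add.
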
 

\begin{proof}
We recycle the classical coupling construction leading to the probability of equality between a pair of random variables being equal to the total variation distance. First consider a partition of the interval $[0,1]$ into a pairwise disjoint family  $(\mathfrak{J}_{e_0}(s))_{s \in S}$ of subintervals, with respective lengths $\nu_{e_0}(s)$. For $e \in E \setminus \{e_0 \}$, let $A_e =  \{ s\in S ; \ \nu_{e_0}(s) \geq \nu_e(s)) \}$. For $s \in A_e$, let $\mathfrak{J}_e(s)$ be a  subinterval of $[0,1]$ with length $\nu_e(s)$ such that $\mathfrak{J}_e(s) \subset \mathfrak{J}_{e_0}(s)$. Then, for $s \in A_e^c$, let $\mathfrak{J}_e(s)$ be the union of a finite number of disjoint subintervals of $[0,1]$, in such a way that $\mathfrak{J}_{e_0}(s) \subset \mathfrak{J}_{e}(s)$, that the total length of $\mathfrak{J}_{e}(s)$ equals $\nu_e(s)$, and that the family  $(\mathfrak{J}_e(s))_{s \in S}$ forms a partition of $[0,1]$. Using a random variable $U$ with uniform distribution on $[0,1]$, and defining $Z_e$ as the unique $s$ such that $\mathfrak{J}_e(s)$ contains $U$, we have that $Z_e \sim \nu_e$ for all $e \in E$, and, for all $e \in  E \setminus \{e_0 \}$, $\P(Z_{e_0}=Z_e) = d_{\mbox{\scriptsize TV}} (\nu_e,\nu_{e_0})$.
Thus, $\P(\exists \  e \in J \mbox{ such that }Z_{e_0} \neq Z_e) \leq \sum_{e \in J} \P(Z_{e_0} \neq Z_e) =  \sum_{e \in J} d_{\mbox{\scriptsize TV}} (\nu_e,\nu_{e_0}) \leq |J| \epsilon$.
\end{proof}

In the sequel, a coupling provided by Lemma \ref{l:couplage-dessous} (resp. Lemma \ref{l:couplage-dessus}) will be called a type I (resp. type II) coupling.

\section{Trapezoids}\label{s:trapezoid}

In this paper, we use the generic term {\it trapezoid} to refer to discrete isoceles trapezoids drawn on the space-time lattice $\Z \times (-\N)$ whose lateral sides have their respective slopes equal either to $-1,+1$ or to $+1,-1$ , as shown in Fig. \ref{f:up-down}. We distinguish between {\it downward} trapezoids (when the top is longer than the base), and {\it upward} trapezoids (when the top is shorter than the base), with time flowing from top to bottom.

We define the {\it outer boundary} of an upward trapezoid $\T$ as the union, on both sides, of the two discrete segments parallel to the lateral sides of $\T$, at horizontal distance respectively $1$ and $2$ from $\T$, starting at the ordinate of the top, and stopping one unit above the ordinate of the base. The outer boundary is denoted by $\partial_{+} \T$. We also use the notation $\T(m)=\T \cap (\Z \times \{m\})$.

\begin{figure}
\centering
\includegraphics[width=10cm]{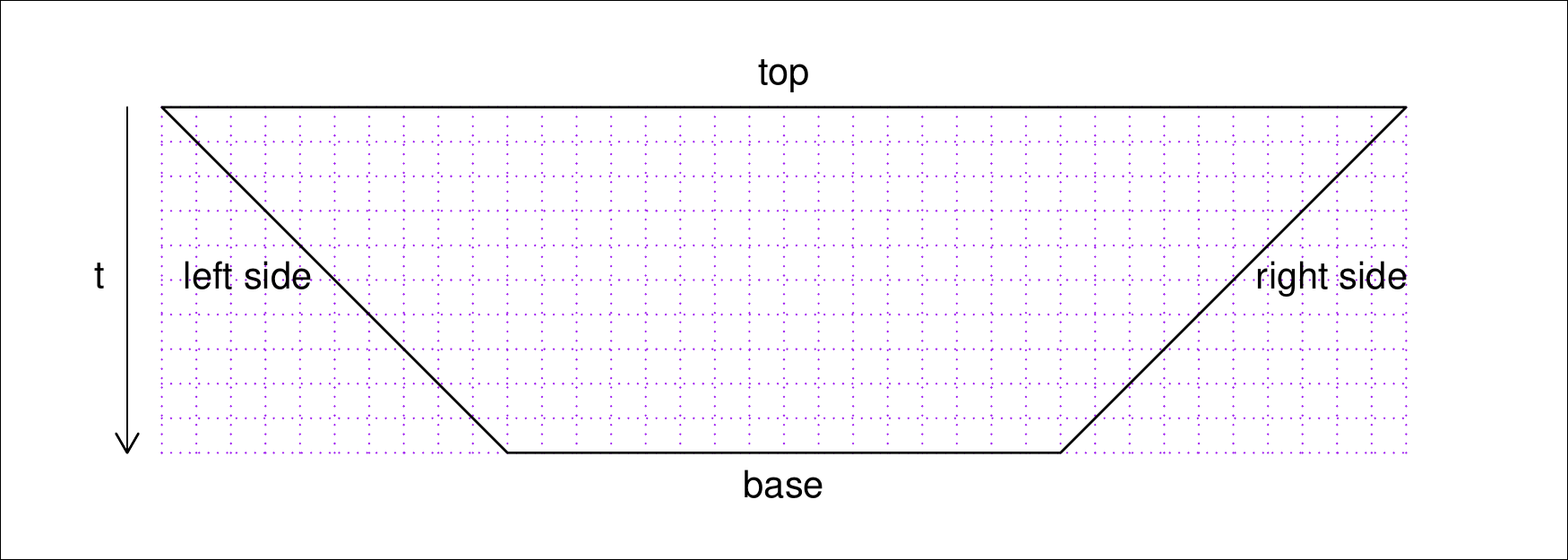}

\vspace{1em}

 \includegraphics[width=10cm]{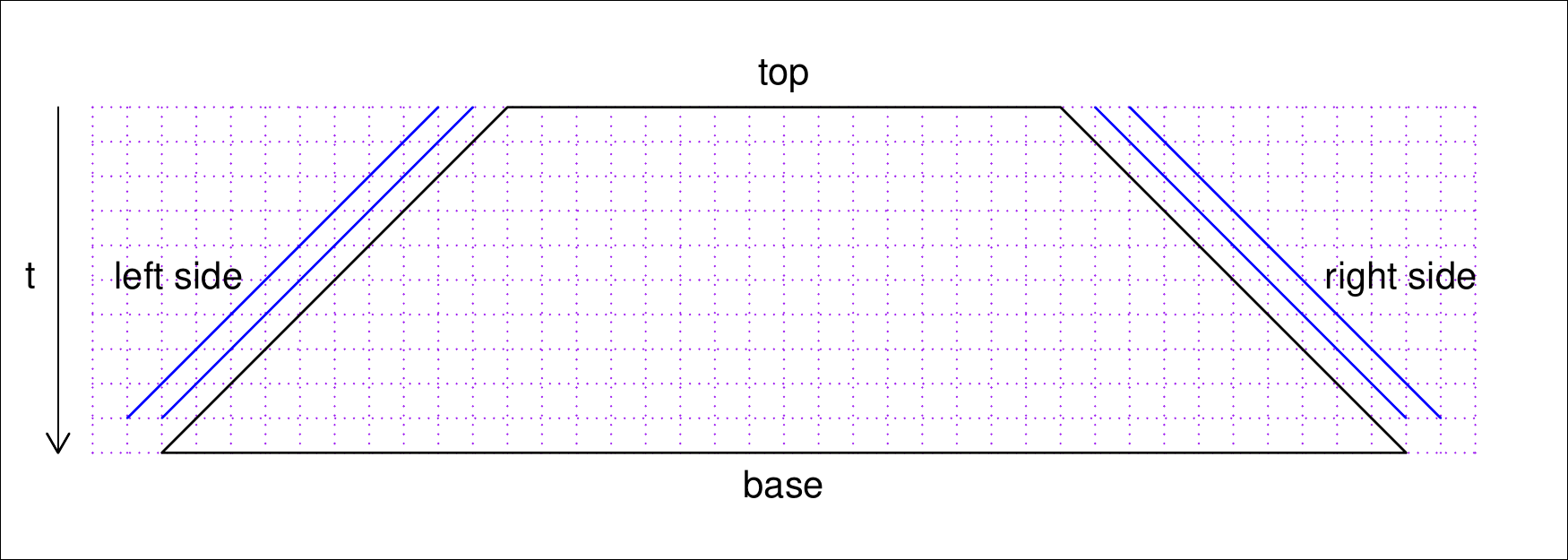}
\caption{A downward trapezoid (above) and an upward trapezoid (below). In the upward case,  the outer boundary is shown in blue. The integer lattice is drawn using purple dotted lines.}
\label{f:up-down}
\end{figure}

\subsection{Dynamics within a trapezoid}

\subsubsection{Downward case}

Consider a downward trapezoid $\T$  with height $L$ and base-length $K$, with  $\mbox{top}(\T)=\llbracket z-L, z+K+L \rrbracket \times \{ \tau \} = \T(\tau) $, and $\mbox{base}(\T)=  \llbracket z, z+K \rrbracket \times \{ \tau +L \} = \T(\tau+L)$. Starting from a configuration $\zeta$ consisting of an element of $\A$ at each site of the top of $\T$, we define the PCA dynamics within $\T$ as a Markov process on the successive state spaces $\A^{\T(\tau)},\ldots, \A^{\T(\tau+L)}$ in which, given the configurations within $ \T(\tau+m)$, where $0 \leq m \leq L-1$, the configuration within $ \T(\tau+m+1)$ is obtained by following \eqref{e:PCA-dyn-1}-\eqref{e:PCA-dyn-2}, for $x \in \llbracket z-L+m+1, z+K+L-m-1 \rrbracket$. We denote by $G(\zeta,\T)$ the resulting overall distribution on $\A^{\T}$.

The following restriction property shows that the dynamics within $\T$ we have just defined, coincides with the restriction of the overall dynamics of the PCA within $\T$, conditional upon a suitably defined "outside" of $\T$. The proof is omitted, and is an easy consequence of e.g. the basic coupling described in Subsection \ref{ss:basic} below.

\begin{lemma}\label{l:restriction-downward}
Consider $s \leq \tau$, and define $ \mbox{out}_s(\T)$ as the set of $(x,t)$ such that either $s \leq t < \tau$, or $\tau \leq t \leq \tau+L$ and the horizontal distance from $(x,t)$ to the boundary of $\T$ is $\geq  t-\tau$.
Starting from $X_s=\xi$ at a time $s \leq \tau$, the distribution of $(X_t(x))_{(x,t) \in \T}$, conditional upon $(X_t(x))_{(x,t) \in \mbox{out}_s(\T)}$, is  $G(\zeta,\T)$, with $\zeta=(X_t(x), (x,t) \in \mbox{base}(\T))$.
\end{lemma}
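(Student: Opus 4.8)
The plan is to realise the whole PCA through the basic coupling of Subsection~\ref{ss:basic}: an i.i.d.\ family $(U_{x,t})_{x \in \Z,\, t}$ together with a deterministic update map $f$ such that $f(\mathbf{v},U_{x,t}) \sim \mathcal{K}(\mathbf{v},\cdot)$ and $X_t(x) = f\big(\pi_{\llbracket x-1,x+1 \rrbracket}(X_{t-1}),U_{x,t}\big)$. Two elementary features of this representation drive the argument. First, \emph{finite speed of propagation}: iterating the update shows that, given $X_s = \xi$, the value $X_t(x)$ is a deterministic function of $\xi$ restricted to $\llbracket x-(t-s),x+(t-s) \rrbracket$ and of the noise variables $U_{y,r}$ with $s < r \leq t$ and $|y-x| \leq t-r$, i.e.\ of its backward light cone. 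Second, the trapezoidal dynamics $G(\zeta,\T)$ is nothing but this same update map fed with fresh noise, so it can be driven by the very same variables $U_{x,t}$.

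I would then split the noise into the \emph{internal} family $N_{\mathrm{in}} = (U_{x,t})_{(x,t) \in \T,\ t > \tau}$ and the complementary family $N_{\mathrm{out}}$, and establish two measurability claims. (A) Since the update at any site $(x,\tau+m+1) \in \T$ reads only the three sites $x-1,x,x+1$ of row $\tau+m$, all of which again belong to $\T$, an induction on $m$ shows that $(X_t(x))_{(x,t) \in \T}$ is a deterministic function of the top row $\pi_{\T(\tau)}(X_\tau)$ and of $N_{\mathrm{in}}$; moreover this function is \emph{exactly} the one defining $G$, so that feeding it fresh i.i.d.\ noise produces the law $G(\zeta,\T)$ with $\zeta = \pi_{\T(\tau)}(X_\tau)$. (B) For every $(x,t) \in \mbox{out}_s(\T)$, the value $X_t(x)$ is measurable with respect to $\xi$ and $N_{\mathrm{out}}$ alone; equivalently, its backward light cone avoids $N_{\mathrm{in}}$.

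Granting (A) and (B), the conclusion follows from a clean independence argument. As all the $U_{x,t}$ are independent, $N_{\mathrm{in}}$ is independent of $(\xi,N_{\mathrm{out}})$. By (B) the data $(X_t(x))_{(x,t) \in \mbox{\scriptsize out}_s(\T)}$ is $\sigma(\xi,N_{\mathrm{out}})$-measurable, and it contains the whole top row $\pi_{\T(\tau)}(X_\tau)$ — every site of row $\tau$ lies in $\mbox{out}_s(\T)$, the distance condition being vacuous at $t = \tau$ — which is precisely the initial datum $\zeta$ of $G$. Conditioning on this data therefore fixes $\zeta = \pi_{\T(\tau)}(X_\tau)$ while leaving $N_{\mathrm{in}}$ with its original i.i.d.\ law; by (A) the conditional law of $(X_t(x))_{(x,t) \in \T}$ is then exactly $G(\zeta,\T)$, which is the assertion of the lemma.

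The only genuine work is the geometric verification of (B), which is where the precise shape of $\mbox{out}_s(\T)$ — the requirement that the horizontal distance to $\T$ be at least $t-\tau$ — is used. At each time $\tau+j$ with $j \geq 1$, the internal noise occupies an interval whose endpoints recede inward from the two top corners of $\T$ at unit speed; a point sitting at horizontal distance $\geq t-\tau$ from $\T$ therefore has its backward light cone lying entirely to one side of this shrinking interval, and a short induction on $m$ shows that the outside region is causally self-contained and disjoint from $N_{\mathrm{in}}$. I expect this light-cone bookkeeping, rather than the probabilistic step, to be the main (though routine) obstacle, consistent with the paper's remark that the proof is an easy consequence of the basic coupling.
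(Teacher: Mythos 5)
Your proposal is correct and follows exactly the route the paper intends: the paper omits the proof, stating only that it is an easy consequence of the basic coupling, and your argument (realise the process via i.i.d.\ noise $U_{x,t}$, observe that $X_{\T}$ is a function of the top row and of the internal noise $N_{\mathrm{in}}$ alone, and check that the backward light cones of the sites of $\mbox{out}_s(\T)$ avoid $N_{\mathrm{in}}$, so that conditioning leaves $N_{\mathrm{in}}$ i.i.d.) is precisely that consequence, with the light-cone bookkeeping in (B) carried out correctly for the stated definition of $\mbox{out}_s(\T)$. You also rightly read $\zeta$ as the configuration on $\mbox{top}(\T)$ rather than $\mbox{base}(\T)$ (a typo in the statement, since $G(\zeta,\T)$ takes its initial datum on the top), and correctly note that the top row is contained in $\mbox{out}_s(\T)$ so that $\zeta$ is measurable with respect to the conditioning.
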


\subsubsection{Upward case}

Consider an upward trapezoid $\T$ with height $L$ and top-length $M$, $\mbox{top}(\T)= \llbracket z, z+M \rrbracket  \times \{ \tau \}= \T(\tau)$  and $\mbox{base}(\T)=  \llbracket z-L, z+M+L \rrbracket \times \{ \tau +M \}  = \T(\tau+L)$. Starting from a configuration $\zeta$ consisting of an element of $\A$ at each site of the top of $\T$, and a boundary condition $\chi$ consisting of an element of $\A$ at each site of the outer boundary $\partial_{+} \T$, we can define the PCA dynamics within $\T$ as in the previous case: given the configurations within $ \T(\tau+m)$, where $0 \leq m \leq L-1$, the configuration  within $ \T(\tau+m+1)$ is obtained by following \eqref{e:PCA-dyn-1}-\eqref{e:PCA-dyn-2}, for $x \in \llbracket z-m-1, z+M+m+1 \rrbracket$, using the boundary condition to make sense of \eqref{e:PCA-dyn-2} when $x \in \{z-m-1,z-m,z+M+m,z+M+m+1\}$. We denote by $G_{\chi}(\zeta,\T)$ the resulting overall distribution on $\A^{\T}$. 

We now state a restriction property for the dynamics with boundary conditions on $\T$. The proof is similar to that of Lemma \ref{l:restriction-upward}.

\begin{lemma}\label{l:restriction-upward}
Consider $s \leq \tau$, and define $ \mbox{out}_s(\T)$ as the set of $(x,t) \in \llbracket s, \tau+L \rrbracket \times \Z $ such that either $(x,t) \notin \T$ or $(x,t) \in \mbox{base}(\T)$. 
Starting from $X_s=\xi$ at a time $s \leq \tau$, the distribution of $(X_t(x))_{(x,t) \in \T}$, conditional upon $(X_t(x))_{(x,t) \in \mbox{out}_s(\T)}$, is  $G_{\chi}(\zeta,\T)$, with $\zeta=(X_t(x), (x,t) \in \mbox{base}(\T))$ and 
$\chi=(X_t(x), (x,t) \in \partial_+(\T)$.
\end{lemma}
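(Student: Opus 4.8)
The plan is to prove Lemma \ref{l:restriction-upward} in close parallel to Lemma \ref{l:restriction-downward}, reducing everything to the Markov structure of the PCA dynamics and the deterministic speed-one propagation of dependencies. The essential point is that, once we condition on the configuration at every site lying strictly outside $\T$ but within the relevant time-window $\llbracket s, \tau+L \rrbracket$, together with the configuration on $\mbox{base}(\T)$, the only remaining randomness inside $\T$ is generated layer by layer according to \eqref{e:PCA-dyn-1}--\eqref{e:PCA-dyn-2}, and every cell in a given layer of $\T$ that is not on a lateral edge has all three of its parents (at horizontal offsets $-1,0,+1$) available either inside $\T$ or, for the edge cells, on the outer boundary $\partial_+\T$.

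Concretely, I would first fix the initial condition $X_s = \xi$ and, using the Markov property of the PCA in time, write the joint law of the whole space-time slab $\llbracket s, \tau+L \rrbracket \times \Z$ as a product of single-step kernels. Conditioning on the sigma-algebra generated by $(X_t(x))_{(x,t) \in \mbox{out}_s(\T)}$ freezes all cells outside $\T$ and the base layer; by the conditional independence in \eqref{e:PCA-dyn-1}, the remaining cells inside $\T$ update independently given the previous layer. The key combinatorial check is that the parents of each updated cell inside $\T(\tau+m+1)$, for $x \in \llbracket z-m-1, z+M+m+1 \rrbracket$, lie among the cells of $\T(\tau+m)$ supplemented, for the two sites at each lateral edge, by the sites of $\partial_+\T$ at the corresponding height — this is exactly the geometry encoded in the definition of $G_\chi(\zeta,\T)$ and of the outer boundary. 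I would verify this inclusion at a generic height $m$ and note that the horizontal-distance-$1$-and-$2$ segments making up $\partial_+\T$ are precisely what is needed to supply the missing parents.

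The main obstacle, such as it is, is bookkeeping rather than conceptual: one must confirm that no parent of an interior cell falls into the conditioned region $\mbox{out}_s(\T)$ in a way that introduces extra dependence, and that the boundary cells $\{z-m-1, z-m, z+M+m, z+M+m+1\}$ are exactly matched by available values in $\chi$. Since the dynamics propagates at speed one and $\T$ has lateral slopes $\pm 1$, the cone of dependence of any interior cell stays within $\T \cup \partial_+\T$ down to the base, so the conditional law factorizes exactly as the layered construction defining $G_\chi(\zeta,\T)$. It then follows that the conditional distribution of $(X_t(x))_{(x,t)\in\T}$ coincides with $G_\chi(\zeta,\T)$ with $\zeta$ read off the base and $\chi$ read off $\partial_+\T$, as claimed. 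As in the downward case, this can be made fully rigorous via the explicit basic coupling of Subsection \ref{ss:basic}, so I would either invoke that coupling or simply remark that the proof is a verbatim adaptation of Lemma \ref{l:restriction-downward} with the outer boundary playing the role previously played by the ambient configuration.
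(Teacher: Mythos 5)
Your proposal is correct and takes essentially the same route as the paper, which omits the proof of this lemma (and of Lemma \ref{l:restriction-downward}) and simply refers to the basic coupling of Subsection \ref{ss:basic}: under that coupling the interior of $\T$ is a deterministic function of the i.i.d. updates attached to sites of $\T$ together with the values on $\mbox{top}(\T) \cup \partial_+ \T$, while the configuration on $\mbox{out}_s(\T)$ is a function of updates attached to sites outside $\T$, so the two are conditionally independent given $(\zeta,\chi)$ and the layered factorization you describe is exactly $G_{\chi}(\zeta,\T)$. The one point worth stating fully explicitly --- which you only gesture at with the ``extra dependence'' remark --- is the converse inclusion, namely that no cell of $\mbox{out}_s(\T)$ has a parent inside $\T$ (this holds because $\T$ widens at exactly speed one), since that is what guarantees that conditioning on the exterior at later times does not bias the interior.
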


\subsection{Coupling within a trapezoid}

\subsubsection{Downward $(K,L)-$coupling}

Given a downward trapezoid $\T$  with height $L$ and base-length $K$, we define a downward $(K,L)-$coupling to be a coupling of the dynamics within $\T$, for every possible initial configuration on the top, that is, a coupling of the family $G(\zeta,\T), \ \zeta \in \A^{\mbox{top}(\T)}$. Note that, given a coupling for the configuration at the base, i.e. a coupling for the family $\pi_{\mbox{base}(\T)} (G(\zeta,\T)), \ \zeta \in \A^{\mbox{top}(\T)}$,
one can always build a full $(K,L)-$coupling by sampling from the distribution of the whole dynamics within $\T$ starting from $\zeta$, conditional upon the random configuration at the base generated by the coupling. If $\Psi$ denotes a random function from $\A^{\mbox{top}(\T)}$ to $\A^{\T}$ corresponding to a $(K,L)-$coupling, we say that {\it coalescence} occurs when $ \pi_{\mbox{base}(\T)} \circ  \Psi$ is a constant function, and we say that an $(x,t) \in \mbox{base}(\T)$ is {\it locked} when $\pi_{(x,t)} \circ \Psi$ is a constant function.

\subsubsection{Upward $(M,L)-$coupling}

For an upward trapezoid $\T$ with height $L$ and top-length $M$, we define an upward $(M,L)-$coupling to be a coupling of the dynamics within $\T$ for every possible boundary condition, and every possible initial configuration on the top of $\T$, i.e. a coupling for the family $G_{\chi}(\zeta,\T), \ \zeta \in \A^{\mbox{top}(\T)}, \chi \in \A^{\partial_{+} \T}$. 
As above, a coupling for the configuration at the base is enough to define a full $(M,L)-$coupling. If $\Psi$ denotes a random function corresponding to an $(M,L)-$coupling, we say that {\it coalescence} occurs for the boundary condition $\chi$ when $\zeta \mapsto \pi_{\mbox{base}(\T)} (\Phi(\zeta,\chi))$ is a constant function, and we say that $(x,t) \in \mbox{base}(\T)$ is {\it locked} for the boundary condition $\chi$ when $\zeta \mapsto \pi_{(x,t)} (\Phi(\zeta,\chi))$ is a constant function.

\subsubsection{The basic coupling}\label{ss:basic}

The basic coupling provides a simple way of defining couplings for the PCA dynamics.
It is defined through an i.i.d. family of random functions $(\Gamma_{x,t})_{x \in \Z, t \in \Z}$, where  $\Gamma_{x,t}   \   :    \      \A^{\{-1,0,1 \}} \to \A$ is such that, for all $\mathbf{v} \in  \A^{\{-1,0,1 \}}$, the law of $\Gamma_{x,t}(\mathbf{v})$ is $\mathcal{K}(\mathbf{v},\cdot)$. Moreover, thanks to the positive rates property \eqref{e:positive-rates}, we may assume that there is a $\kappa>0$ and a $w \in \A$ such that \begin{equation}\label{e:positive-rates-coupl}\P( \Gamma_{x,t}(\mathbf{v})=w \mbox{ for all } \mathbf{v} ) \geq \kappa.\end{equation}

(It is easy to explicitly design such functions, using a single random variable $U_{x,t}$ with uniform distribution on $[0,1]$ and a suitable partition of $[0,1]$ into sub-intervals for each $\mathbf{v}$). Conditions \eqref{e:PCA-dyn-1}-\eqref{e:PCA-dyn-2} are then implemented through the equation:
$$X_t(x) = \Gamma_{x,t} \left( \pi_{\llbracket x-1,x+1 \rrbracket}(X_{t-1}) \right).$$

Using the basic coupling, we can easily design downward $(K,L)-$ or upward $(M,L)-$couplings, but these may not enjoy the coalescence properties we are after. We shall nevertheless use the basic coupling on parts of the trapezoids we consider, using the restriction properties contained in Lemmas \ref{l:restriction-downward} and \ref{l:restriction-upward} to patch together couplings defined on different parts.

\section{Proof of the main results}\label{s:proof}

Our first lemma shows that, for downward trapezoids with a sufficiently large height-to-base ratio, one has a coupling with suitable control over the non-coalescence probability.

\begin{lemma}\label{l:debut}
There exist constants $\alpha > 0$, $c_1>0$, $d_1>0$ such that, for all large enough $K$, and all $L \geq \alpha K$, one can define a downward $(K,L)-$coupling such that the probability of non-coalescence is bounded above by $c_1 \cdot e^{-d_1 K}$.
\end{lemma}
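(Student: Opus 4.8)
The plan is to deduce coalescence at the base from a single application of the type I coupling (Lemma~\ref{l:couplage-dessous}) to the \emph{full joint} law of the base configuration, using exponential ergodicity to control the relevant total-variation distances. The key structural point I would exploit is that the lower bound furnished by Lemma~\ref{l:couplage-dessous} on the probability that all coupled variables coincide does not depend on the number of top configurations being coupled, only on the alphabet size of the base; this is exactly what makes a one-shot argument possible despite the enormous index set.

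Concretely, fix a downward trapezoid $\T$ with $\mbox{base}(\T) = \llbracket z, z+K \rrbracket \times \{\tau+L\}$ and consider the family of base laws $\nu_\zeta \egaldef \pi_{\mbox{\scriptsize base}(\T)}\bigl(G(\zeta,\T)\bigr)$, indexed by $\zeta \in \A^{\mbox{\scriptsize top}(\T)}$. Because, in the downward dynamics, the value at each base site $(x,\tau+L)$ depends on $\zeta$ only through its restriction to the backward light cone $\llbracket x-L,x+L\rrbracket \subset \mbox{top}(\T)$ (together with the internal randomness), the law $\nu_\zeta$ coincides with the genuine PCA marginal $\mathrm{Law}\bigl(\pi_{\mbox{\scriptsize base}(\T)}(X_{\tau+L}) \mid X_\tau = \xi\bigr)$ for any $\xi$ extending $\zeta$ — a consequence of Lemma~\ref{l:restriction-downward} (or read off directly from the basic coupling). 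Applying exponential ergodicity \eqref{e:expo-ergod} with $I = \llbracket z,z+K\rrbracket$ and elapsed time $L$, and using the triangle inequality through $\pi_{\mbox{\scriptsize base}(\T)}(\mu)$, I would then obtain, for a fixed reference $\zeta_0$,
\[
d_{\mbox{\scriptsize TV}}(\nu_\zeta,\nu_{\zeta_0}) \leq 2a(K+1)e^{-bL} \qquad \mbox{for all } \zeta .
\]

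Next I would invoke Lemma~\ref{l:couplage-dessous} with $S=\A^{\mbox{\scriptsize base}(\T)}$ (so $|S|=|\A|^{K+1}$), index set $E=\A^{\mbox{\scriptsize top}(\T)}$, reference $e_0=\zeta_0$, and $\epsilon \egaldef 2a(K+1)|\A|^{K+1}e^{-bL}$, for which the hypothesis $d_{\mbox{\scriptsize TV}}(\nu_\zeta,\nu_{\zeta_0}) \leq \epsilon/|S|$ holds. Choosing $\gamma=\sqrt\epsilon$ (legitimate once $\epsilon<1$), the lemma yields a coupling of the base laws whose non-coalescence probability is at most $1-(1-\sqrt\epsilon)^2 \leq 2\sqrt\epsilon$; extending it to a full $(K,L)$-coupling by resampling the interior conditionally on the coupled base (as described in Section~\ref{s:trapezoid}) produces the required downward $(K,L)$-coupling.

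It remains to choose $\alpha$. Since $\sqrt\epsilon = \sqrt{2a(K+1)}\,|\A|^{(K+1)/2}e^{-bL/2}$ and $L\geq \alpha K$, the exponent is controlled by $\tfrac{K}{2}(\log|\A|-b\alpha)$ up to lower-order terms, so any $\alpha > (\log|\A|)/b$ makes $2\sqrt\epsilon \leq c_1 e^{-d_1 K}$ for all large $K$, the polynomial prefactor being absorbed into $c_1,d_1$ (and ``$K$ large'' ensuring $\epsilon<1$, as well as $\gamma=\sqrt\epsilon\in\,]\epsilon,1[$); the bound only improves as $L$ grows beyond $\alpha K$. The main — essentially the only — obstacle is the competition between the exponential blow-up $|\A|^{K+1}$ coming from the size of the base and the ergodic decay $e^{-bL}$; this is precisely what forces a large enough height-to-base ratio, and, through the factor $|I|$ in \eqref{e:expo-ergod}, it is where the linear (one-dimensional) geometry of the base is used.
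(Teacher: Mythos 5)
Your proposal is correct and follows essentially the same route as the paper: the same application of Lemma~\ref{l:couplage-dessous} with $S=\A^{\mbox{\scriptsize base}(\T)}$, $E=\A^{\mbox{\scriptsize top}(\T)}$, the same $\epsilon=2a(K+1)|\A|^{K+1}e^{-bL}$ and $\gamma=\sqrt{\epsilon}$, and the same threshold $\alpha>(\log|\A|)/b$. The only difference is that you spell out the identification of the trapezoid base law with the full-line PCA marginal (via the light cone) and the triangle inequality through $\mu$, which the paper leaves implicit.
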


\begin{proof}
For $K \geq 1$, and arbitrary $z$, $\tau$, denote by $\T$ the downward trapezoid with $\mbox{top}=\llbracket z-L, z+K+L \rrbracket \times \{ \tau \}  $, and $\mbox{base}= \llbracket z, z+K \rrbracket \times \{ \tau +L \}  $. We shall apply Lemma \ref{l:couplage-dessous} with $S=\A^{\mbox{base}}$, $E=\A^{\mbox{top}}$, $\mu_e=\pi_{\mbox{base}}(G(e,\T))$, and $e_0$ an arbitrarily chosen element of $E$.

 One has $|S|=|\A|^{K+1}$, and $d_{\mbox{\scriptsize TV}} (\mu_e,\mu_{e_0}) \leq 2a (K+1) \cdot e^{-b L} \leq 2a (K+1) \cdot e^{-b \alpha K}$ for all $e$.
As soon as $\alpha > \log(|\A|)/b$, we see that $\epsilon=2a(K+1) \cdot e^{-b \alpha K} \cdot |\A|^{K+1}$ decays exponentially fast with $K$, so we can apply Lemma \ref{l:couplage-dessous}, with the value of $\epsilon$ just defined, and e.g. $\gamma = \epsilon^{1/2}$, to get the desired coupling.
\end{proof}

It turns out that, to prove Theorem \ref{t:CFTP}, we need to extend Lemma \ref{l:debut} to allow for "flatter" trapezoids, at the price of a slightly worse bound on the coalescence probability. This is done in the following two lemmas, using as a key tool a family of nested self-similar trapezoids with a type I coupling (Lemma \ref{l:self-sim}), followed by a type II coupling (Lemma \ref{l:aplati}).

\begin{lemma}\label{l:self-sim}
For all $A>0$, and for arbitrarily large $L$, there exists a downward $(K,L)-$coupling with the following properties as $L \to +\infty$:
\begin{itemize}
\item[\textbullet]  $L/K \sim (\log L)^{-A}$
\item[\textbullet] the probability that the number of unlocked sites exceeds $K \cdot (\log L)^{-A}$ is bounded above by 
$e^{-  L^{1+o(1)}}$
\end{itemize}
\end{lemma}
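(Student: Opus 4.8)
The plan is to assemble the flat $(K,L)$-coupling out of the steep couplings already produced by Lemma \ref{l:debut}. I would cover the flat trapezoid $\T$ (height $L$, base-length $K$ with $L/K\sim(\log L)^{-A}$) by a family of \emph{steep} sub-trapezoids of base-length $\Theta(L)$ — namely $\kappa\sim L/\alpha$, the flattest shape that Lemma \ref{l:debut} still handles at height $\le L$ — apply to each the type~I coupling of Lemma \ref{l:couplage-dessous} (exactly as in Lemma \ref{l:debut}), and let coalescence propagate downward to $\mbox{base}(\T)$. Since each such block locks only a segment of length $\sim\kappa$ while $K\sim L(\log L)^A$, one needs $N\sim(\log L)^A$ blocks. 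The difficulty is that congruent steep trapezoids sharing the base-line overlap near their tops, so a naive one-shot tiling is not a genuine coupling; the role of the \emph{nested self-similar} family is to place the blocks on a staircase of time-levels so that they become pairwise disjoint while still covering almost all of the base.

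The mechanism rests on two facts. First, a coalesced (hence locked) segment of length $\ell$ at time $t$ locks, at time $\tau+L$, a segment of length $\ell-2(\tau+L-t)$, the light cone shrinking by one on each side per step; hence a block whose base sits at time $t$ and has length $\kappa+2(\tau+L-t)$ still locks a length-$\kappa$ segment of $\mbox{base}(\T)$. As $t$ ranges over the staircase these block-sizes vary but all remain $\Theta(L)$, which is exactly what makes the family self-similar and keeps every block in the regime of Lemma \ref{l:debut}, so that each fails to coalesce with probability at most $c_1 e^{-d_1\Theta(L)}=e^{-\Theta(L)}$. Second, once the blocks are disjoint I would reveal their exteriors one at a time and invoke Lemma \ref{l:restriction-downward} to realise the type~I coupling \emph{inside} each block conditionally independently, filling the complement of the blocks with the basic coupling of Subsection \ref{ss:basic}. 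The staircase is chosen so that the locked segments cover all but at most $\tfrac12 K(\log L)^{-A}$ sites of $\mbox{base}(\T)$; these deterministically uncovered sites, together with the sites lying under blocks that fail to coalesce, are the only unlocked ones.

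With this in place the probabilistic estimate is routine. Writing $N\sim(\log L)^A$ for the number of (conditionally independent) blocks, each failing with probability $\le e^{-\Theta(L)}$ and each contributing $O(\kappa)=O(L)$ unlocked sites, the number $j_0$ of failures needed to push the count past the budget $K(\log L)^{-A}\sim L$ is $\Theta(1)$, so a binomial-tail bound gives
\[
\P\big(\text{\#unlocked} > K(\log L)^{-A}\big)\ \le\ \sum_{j\ge j_0}\binom{N}{j}e^{-\Theta(jL)}\ \le\ e^{-L^{1+o(1)}},
\]
the polylogarithmic factors being absorbed into the $o(1)$; and $K\sim N\kappa\sim(\log L)^A\cdot L/\alpha$ yields $L/K\sim(\log L)^{-A}$. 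I expect the main obstacle to be the tension between disjointness and coverage: the staircase must be dense enough to leave only an $O\big(K(\log L)^{-A}\big)$ deterministic deficit, yet loose enough to keep the steep trapezoids pairwise disjoint so that Lemma \ref{l:restriction-downward} delivers conditional independence, and one must check that the sites lost along the interfaces of the successive strata do not accumulate beyond the allowed budget.
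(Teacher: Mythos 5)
Your high-level strategy (tile $\T$ with disjoint steep blocks supplied by Lemma \ref{l:debut}, realise them conditionally independently via Lemma \ref{l:restriction-downward} together with the basic coupling in the complement, then bound the failure probability by a union/binomial argument) is the right one, but the step you yourself flag as ``the main obstacle'' --- reconciling disjointness of the blocks with coverage of all but an $O\bigl(K(\log L)^{-A}\bigr)$ portion of the base --- is precisely the content of the lemma, and the single-scale staircase you propose provably cannot resolve it. A block that is to lock a segment of length $m$ of $\mbox{base}(\T)$ from a base sitting $s$ time-steps above it needs base length at least $m+2s$ (your own light-cone observation), hence, to be admissible for Lemma \ref{l:debut}, height at least $\alpha(m+2s)$; fitting inside $\T$ then forces $s\le (L-\alpha m)/(1+2\alpha)$. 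For $m=\kappa\sim L/\alpha$ this gives $s=0$: every such block must have its base on $\mbox{base}(\T)$, height $\approx L$, and top length $\approx \kappa+2L$. Disjointness of the tops then separates consecutive bases by $\approx 2L$, so blocks locking $\Theta(L)$-segments can cover at most the constant fraction $1/(1+2\alpha)$ of the base, nowhere near $1-(\log L)^{-A}$. Filling the resulting gaps (upward triangles of base $2L$ and height $L$) requires strictly smaller steep trapezoids, and each further generation shrinks the admissible block height by a constant factor $\approx(2+1/(2\alpha))^{-1}$ while reducing the uncovered fraction only by a constant factor; driving the deficit below $(\log L)^{-A}$ therefore takes $\Theta(A\log\log L)$ generations, and the smallest blocks have size $L/(\log L)^{\Theta(1)}=o(L)$. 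This contradicts your claims that all block sizes ``remain $\Theta(L)$'', that $N\sim(\log L)^{A}$ blocks suffice, and that each block fails with probability $e^{-\Theta(L)}$.

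The multi-scale recursion is exactly what the paper's proof supplies and what your proposal is missing: the sequences $k_n=\pentinf{\ell_n/\alpha}_2$ and $\ell_{n+1}=k_n/2+2\ell_n$ are designed so that a $(k_{j},\ell_{j})$-trapezoid fits exactly into each gap-triangle left by the previous generation, all bases lie on $\mbox{base}(\T)$ (so no light-cone shrinkage is needed), the uncovered length after $n$ generations is $f=(2q-2)\ell_n\bigl(8\alpha/(1+8\alpha)\bigr)^n=o\bigl(K(\log L)^{-A}\bigr)$ once $n=\pentsup{B\log\log\ell_0}$ with $B\log(1+1/(8\alpha))>A$, and the union bound over the at most $q2^n$ blocks is governed by the smallest ones, giving $q2^nc_1e^{-d_1k_0}=e^{-L^{1+o(1)}}$ since $k_0=L/(\log L)^{O(1)}$. (The paper simply requires that no block fail, on which event the unlocked set is contained in the deterministic deficit of length $f$; your refinement allowing $\Theta(1)$ failures is unnecessary.) Without this nested construction, or some substitute for it, your argument does not go through.
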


\begin{proof}
Let $\alpha > 0$, $c_1>0$ and $d_1>0$ be as in the statement of Lemma \ref{l:debut}.  Let $\ell_0$ be a  even integer number such that $\ell_0 \geq 4 \alpha$, and define inductively the sequences $(\ell_n)_{n \geq 0}$ and $(k_n)_{n \geq 0}$ by 
$k_n = \pentinf{\ell_n/\alpha}_2$, and $\ell_{n+1}=k_n/2+2 \ell_n$, where $\pentinf{m}_2$ stands for the largest even integer number less than or equal to $m$.

These definitions allow one to exactly fit a downward trapezoid with base length $k_{n}$ and height $\ell_n$ into a discrete isoceles triangle with base length $2 \ell_{n+1}$ and height $\ell_{n+1}$, as shown in Fig. \ref{f:trap-tri}.

\begin{figure}
\centering
\includegraphics[width=10cm]{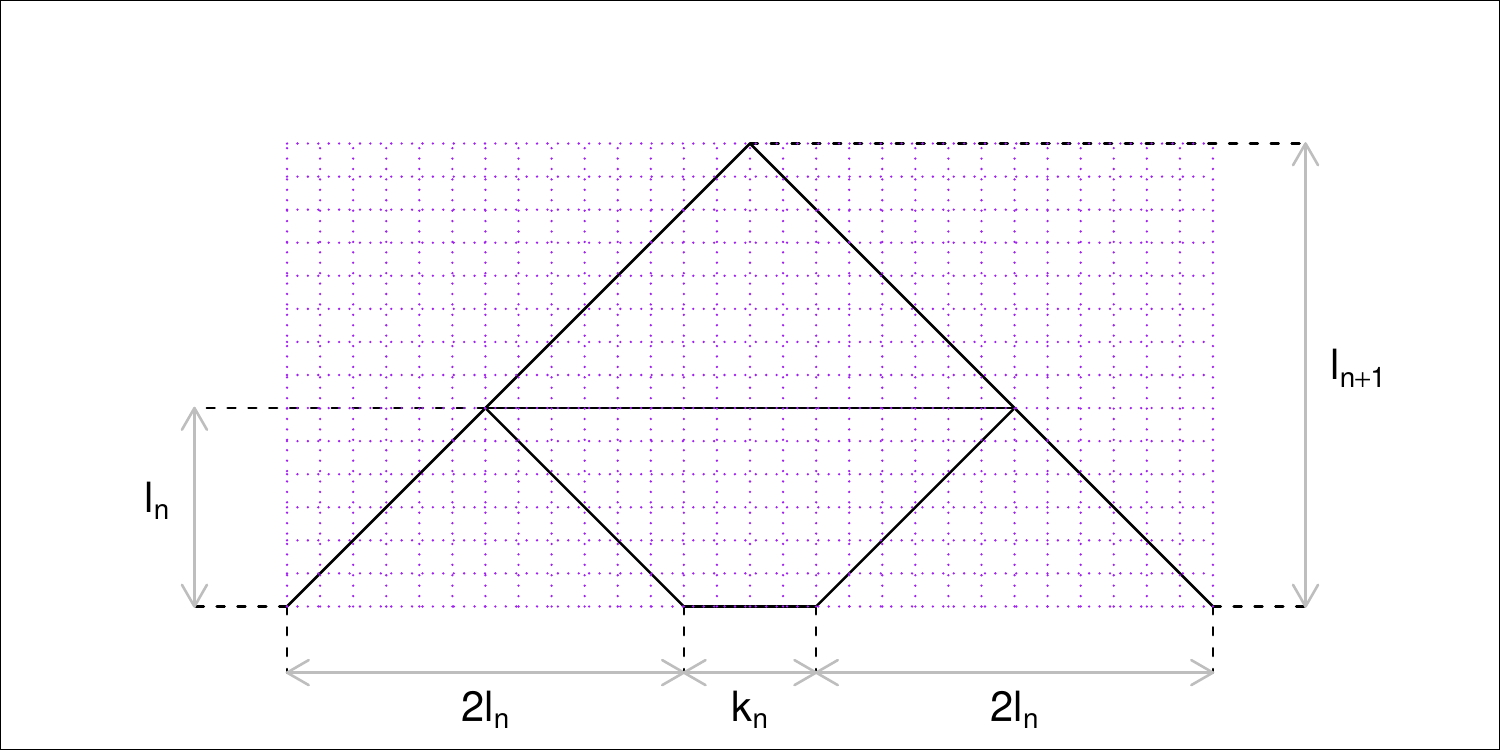}
\caption{Fitting a trapezoid with base length $k_{n}$ and height $\ell_n$ into a triangle with base length $2 \ell_{n+1}$ and height $\ell_{n+1}$. The integer lattice is drawn using purple dotted lines.}
\label{f:trap-tri}
\end{figure}

By definition, we have that, for all $n$, $\ell_{n+1} \geq \ell_n/(2\alpha) -1 + 2 \ell_n = (2+1/(2\alpha)) \ell_n -1$, and we deduce that the sequence $(\ell_n)_{n \geq 0}$ is increasing, 
and that, for all $n \geq 0$, $1/(2 \alpha) \leq k_n / \ell_n \leq 1/\alpha$. 
% [Tant que \ell_n et k_n sont positives (ce qui est le cas pour n=0), \ell_{n+1} \geq \ell_n, d'où k_{n+1} positive, et cela récure. Ensuite, \ell_n/\alpha -2 \leq k_n \leq \ell_n/\alpha, et 2/\ell_n \leq 2/\ell_0=1/(2\alpha)]
On the other hand, we have that $(1+1/(2\alpha))^n \ell_0 \leq \ell_n \leq (2+1/(2\alpha))^n \ell_0$.
% [Encadrement immédiat dans la récurrence  $\ell_{n+1}=k_n/2+2 \ell_n$]

Now put side-by-side $q$ downward trapezoids with base length $k_n$ and height $\ell_n$. These trapezoids form generation $0$, and fit into a larger downward trapezoid $\T$ of height $L=\ell_n$ and base length $K=q k_n + (2q-2) \ell_n$.
Between two consecutive trapezoids of generation $0$ lies a triangle with base length $2 \ell_n$ and height $\ell_n$. Within every such triangle, we fit a trapezoid with base length $k_{n-1}$ and height $\ell_{n-1}$. These trapezoids form generation $1$. We then iterate the following procedure for $i=1,\ldots, n-1$. Between two consecutive trapezoids of generation $\leq i$ (two consecutive trapezoids may belong to distinct generations) lies a triangle with base length $2 \ell_{n-i}$ and height $\ell_{n-i}$. Within every such triangle, we fit a trapezoid with base length $k_{n-i-1}$ and height $\ell_{n-i-1}$. An illustration is provided in Fig. \ref{f:nest-trap}.

The $0-$th generation trapezoids cover a base of total length $q k_n$, and the triangles between them cover a base of total length $(2q-2) \ell_n$. For $i=0,\ldots, n-1$, going from generation $i$ to generation $i+1$ results in the addition of a new generation of trapezoids with heights $\ell_{n-i-1}$ and base lengths $k_{n-i-1}$, which multiplies the base length previously covered by triangles in generation $i$ by a factor $1-\frac{k_{n-i-1}}{k_{n-i-1} + 4 \ell_{n-i-1}} \leq \frac{8 \alpha}{1+8 \alpha}<1$.

As a result, the total length in the base of $\T$ that is not covered by the base of a trapezoid of whichever generation, is less than $f=(2q-2) \ell_n \left( \frac{8 \alpha}{1+8 \alpha}\right)^n$.

There are $q$ trapezoids in generation number $0$, and $q-1$ in generation number $1$. After generation $1$, each further generation leads to twice as many trapezoids as in the previous one, so the total number of trapezoids is $r=q + (q-1) \cdot (1+2+\cdots + 2^{n-2}) \leq q \cdot 2^n$.

We now define a downward $(K,L)-$coupling inside $\T$, for all large enough $\ell_0$. We use within each trapezoid belonging to generation $n-j$ (with height $\ell_j$ and base length $k_j$), the $(k_j,\ell_j)-$coupling from Lemma \ref{l:debut}, independently from other trapezoids (the fact that $\ell_0$ is large enough, and that, by construction,   $\ell_j \geq \alpha k_j$, ensures that the lemma can be applied for all $j=0,\ldots,n$). 
In the part of $\T$ not belonging to any of the previous trapezoids, we just use the basic coupling. That this is a licit construction leading to a downward $(K,L)-$coupling is a consequence of Lemmas \ref{l:restriction-downward} and \ref{l:restriction-upward}.

For a trapezoid of height $\ell_j$ and base length $k_j$, the probability of non-coalescence of the $(k_j,\ell_j)-$coupling is, according to Lemma \ref{l:debut}, bounded above by $c_1 e^{-d_1 k_j}$. By the union bound, the probability that coalescence does not occur in at least one of the trapezoids, is less than $r c_1 e^{-d_1 k_0}$, and so less than $q \cdot 2^n e^{-d_1 k_0}$. When coalescence occurs in every trapezoid, every unlocked site of our overall $(K,L)-$coupling must belong to the complement of the bases of these trapezoids, whose total length does not exceed $f$. 

Now let $B$ be such that $B \cdot  \log \left(1+\frac{1}{8 \alpha} \right)>A$, let $q=\pentsup{\frac{1}{2+1/\alpha} \cdot (\log \ell_0)^A}$ and let $n=\pentsup{B \cdot \log \log \ell_0}$ (assuming that $\ell_0$ is large enough so that $q \geq 2$ and $n \geq 1$).

From the bound $(2+1/\alpha)^n \ell_0 \leq \ell_n \leq (2+2/\alpha)^n \ell_0$, we see that, as $\ell_0 \to +\infty$, $\log \ell_n \sim \log \ell_0$, and also $q \sim \frac{1}{2+1/\alpha} \cdot (\log \ell_n)^A$.
Remembering that $L=\ell_n$ and that $K=q k_n + (2q-2) \ell_n$, we see that 
$K \sim (\log L)^A L$. Moreover, $f=(2q-2) \ell_n \left( \frac{8 \alpha}{1+8 \alpha}\right)^n \sim \frac{2}{2+1/\alpha} K (\log L)^{-B \log \left(1+\frac{1}{8 \alpha} \right) }  = o\left(K \cdot (\log L)^{-A} \right)$.

Now remember that the probability of having more than $f$ unlocked sites is bounded above by $q \cdot 2^n e^{-d_1 k_0}$. We have $q \sim \frac{1}{2+1/\alpha}  (\log L)^A$, $2^n \sim (\log L)^{B \log 2}$,  and, writing $\ell_0 = e^{\log \ell_0}$, and using the fact that $\log L \sim \log \ell_0$, 
and $k_0 = \ell_0/\alpha + O(1)$, we may write $e^{-d_1 k_0}$ as $e^{- L^{1+o(1)}}$, and absorb both smaller order factors $(\log L)^A$ and $(\log L)^{B \log 2}$ into this expression, so that 
the  probability of having more than $f$ unlocked sites is bounded above by $e^{- L^{1+o(1)}}$.

\begin{figure}
\centering
\includegraphics[width=12cm]{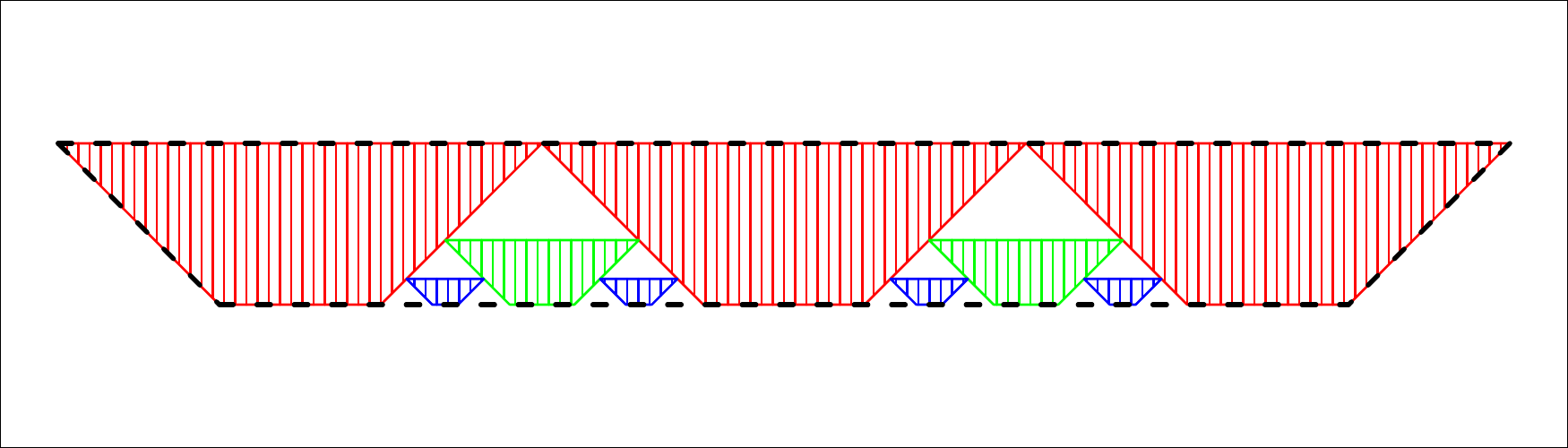}
\caption{Illustration with $q=3$ and $\alpha=1$. The trapezoid $\T$ is drawn with dashed lines. Three generations of nested trapezoids are drawn: generation $0$ (red), generation $1$ (green), generation $2$ (blue).}
\label{f:nest-trap}
\end{figure}

\end{proof}

\begin{lemma}\label{l:aplati}
For any $A > 0$, and for arbitrarily large $L$, there exists a downward $(K,L)-$coupling with the following properties as $L \to +\infty$:
\begin{itemize}
\item[\textbullet]  $L/K \sim h (\log L)^{-A}$ for some constant $h > 0$
\item[\textbullet] the probability that coalescence does not occur is bounded above by 
$e^{- L^{1+o(1)}}$.
\end{itemize}
\end{lemma}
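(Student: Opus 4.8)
The plan is to stack a second, comparatively short downward trapezoid below the flat trapezoid produced by Lemma \ref{l:self-sim}, and to run a type II coupling on this second trapezoid in order to lock the few sites that the first coupling leaves unlocked. The guiding idea is that ``few unlocked sites'' means ``few achievable base configurations'', and that a short extra trapezoid contracts these finitely many configurations to a single one with overwhelming probability.

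First I would apply Lemma \ref{l:self-sim} \emph{with the same exponent $A$} to obtain a downward $(K_1,L_1)$-coupling on a trapezoid $\T_1$ of height $L_1$ and base length $K_1$, with $L_1/K_1 \sim (\log L_1)^{-A}$ and with its number $U$ of unlocked base sites exceeding $u_0 := K_1 (\log L_1)^{-A}$ only with probability at most $e^{-L_1^{1+o(1)}}$; note that $u_0 \sim L_1$ since $K_1 \sim L_1 (\log L_1)^A$. Writing $\Phi_1$ for the associated base map, the elementary but crucial observation is that $\Phi_1$ is constant in its argument at each locked site, so the image $J := \Phi_1(\A^{\mbox{\scriptsize top}(\T_1)})$ has cardinality at most $|\A|^{U}$; in particular $|J| \leq |\A|^{u_0}$ on the likely event $\{U \leq u_0\}$.

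Next I would place a downward trapezoid $\T_2$ of height $L_2$ directly below $\T_1$, with $\mbox{top}(\T_2)=\mbox{base}(\T_1)$ and base length $K_2 = K_1 - 2L_2$, so that $\T := \T_1 \cup \T_2$ is a downward trapezoid of height $L = L_1+L_2$ and base length $K = K_2$. On $\T_2$ I would run a type II coupling (Lemma \ref{l:couplage-dessus}) of the family $(\pi_{\mbox{\scriptsize base}(\T_2)} G(e,\T_2))_{e \in \A^{\mbox{\scriptsize top}(\T_2)}}$, using fresh randomness independent of $\Phi_1$. Exactly as in the proof of Lemma \ref{l:debut}, exponential ergodicity yields $d_{\mbox{\scriptsize TV}}(\pi_{\mbox{\scriptsize base}(\T_2)} G(e,\T_2),\pi_{\mbox{\scriptsize base}(\T_2)} G(e_0,\T_2)) \leq \epsilon_2 := 2a(K_2+1)e^{-bL_2}$ for all $e$ and a fixed reference $e_0$, so Lemma \ref{l:couplage-dessus} produces a base map $\Phi_2$ of $\T_2$ that, for \emph{every} deterministic $J$, is constant on $J$ with probability at least $1-|J|\epsilon_2$. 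Patching $\Phi_1$ and $\Phi_2$ via the restriction property (Lemma \ref{l:restriction-downward}) gives a legitimate downward $(K,L)$-coupling whose base map is $\Phi_2 \circ \Phi_1$, and this coupling coalesces as soon as $\Phi_2$ is constant on $J$.

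The two steps are then combined by conditioning on $\Phi_1$: since $\Phi_2$ is independent of $\Phi_1$ and the guarantee of Lemma \ref{l:couplage-dessus} holds for every deterministic subset, on $\{U \leq u_0\}$ the conditional non-coalescence probability is at most $|J|\epsilon_2 \leq |\A|^{u_0}\epsilon_2$. The main quantitative obstacle is precisely to balance the factor $|\A|^{u_0} = e^{(\log|\A|)\,L_1(1+o(1))}$, counting achievable base configurations, against the contraction factor $\epsilon_2 \approx e^{-bL_2}$: I would choose $L_2 = \lceil \theta L_1\rceil$ for any fixed $\theta > (\log|\A|)/b$, making $|\A|^{u_0}\epsilon_2 \leq e^{-cL_1(1+o(1))} = e^{-L^{1+o(1)}}$. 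Adding the bound $e^{-L_1^{1+o(1)}}=e^{-L^{1+o(1)}}$ on $\P(U>u_0)$ gives the stated non-coalescence estimate. Finally, since $L_2 = \Theta(L_1) = o(K_1)$, one has $K = K_2 \sim K_1$ and $L = (1+\theta)L_1(1+o(1))$, so that $L/K \sim (1+\theta)(\log L)^{-A}$, which is the required form with $h = 1+\theta > 0$.
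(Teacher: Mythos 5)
Your proposal is correct and follows essentially the same route as the paper: apply Lemma \ref{l:self-sim}, observe that few unlocked sites means at most $|\A|^{K_1(\log L_1)^{-A}}$ achievable base configurations, and append a type II coupling on a trapezoid of height proportional to $L_1$ chosen so that the contraction $e^{-bL_2}$ beats the entropy factor $|\A|^{u_0}$ (your $\theta$ plays the role of the paper's $h-1$, with the same threshold $(\log|\A|)/b$). The conditioning argument, the independence of the two couplings, and the final asymptotics $L/K\sim(1+\theta)(\log L)^{-A}$ all match the paper's proof.
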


\begin{proof}
Apply Lemma  \ref{l:self-sim} to find a  $(K_0,L_0)-$coupling for an arbitarily large $L_0$, with $L_0/K_0 \sim (\log L_0)^{-A}$ as $L_0 \to +\infty$, and let $t=\pentsup{(h-1) K_0 \cdot (\log L_0)^{-A}}$ for a certain constant $h>1+(\log |\A|)/b$.
We then let the resulting $K_0$ sites at the base evolve according to a type II $(K_0-2t,t)$-coupling (see Lemma \ref{l:couplage-dessus}), independent from the previous $(K_0,L_0)-$coupling.

Conditional upon the $(K_0,L_0)-$coupling, when the number of locked sites is less than $K_0 \cdot (\log L_0)^{-A}$, there are at most $|\A|^{K_0 \cdot (\log L_0)^{-A}}$ distinct initial configurations fed into the top of the type II $(K_0-2t,t)-$coupling. 
In such a case, by Lemma \ref{l:couplage-dessus}, the (conditional) probability that coalescence does not occur within the $(K_0-2t,t)-$coupling is bounded above by $|\A|^{K_0 \cdot (\log L_0)^{-A}} (K_0-2t)2 a e^{-bt}$, which rewrites as  $e^{-L_0^{1+o(1)}}$ since $h>(\log |\A|)/b$ and $L_0/K_0 \sim (\log L_0)^{-A}$
On the other hand, by Lemma \ref{l:self-sim}, the probability that the number of locked sites exceeds $K_0 \cdot (\log L_0)^{-A}$ in the $(K_0,L_0)-$coupling is also bounded above by $e^{-L_0^{1+o(1)}}$.

We have thus built a downward $(K,L)-$coupling with $K=K_0-2t$ and $L=L_0+t$, with $K \sim K_0$ and $L \sim h L_0$, and so $L/K \sim h (\log L_0)^{-A} \sim  h (\log L)^{-A}$.
Moreover, the non-coalescence probability is bounded above by $e^{-L_0^{1+o(1)}}$, and so by  $e^{-L^{1+o(1)}}$.
\end{proof}

We now consider couplings for the dynamics involving boundary conditions within an upward trapezoid.

\begin{lemma}\label{l:bc-coupl}
There exists a constant $\theta > 0$ such that, for all large enough $L$ and $M \leq  L/(\log L)^{\theta}$, there is an
 $(M,L)-$coupling whose non-coalescence probability is bounded above, for any boundary condition, by $e^{-L^{1+o(1)}}$ as $L \to +\infty$, where the $o(1)$ is uniform over $M$ and over the boundary condition.
\end{lemma}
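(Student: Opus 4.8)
The plan is to reduce coalescence of the upward trapezoid to the construction of a single $\zeta$-independent horizontal barrier, and then to build that barrier out of the flat downward couplings from Lemma~\ref{l:aplati} together with a separate treatment of the two thin regions adjacent to the lateral sides. First I would record the following reduction. Fix the boundary condition $\chi$. If, for some intermediate time $\tau+s$ with $0<s\le L$, the coupled configuration on the whole slice $\T(\tau+s)$ is almost surely independent of the top configuration $\zeta$, then, running the basic coupling below time $\tau+s$ with the same randomness and the same fixed $\chi$ for every $\zeta$, the configuration on all of $\T$ below the slice — in particular on $\mbox{base}(\T)$ — is also $\zeta$-independent, so coalescence occurs. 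This uses the restriction property of Lemma~\ref{l:restriction-upward} to glue the coupling above the slice to the basic coupling below it. Thus it suffices to lock an entire slice.

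To lock the slice, I would consider the upward trapezoid of height $s$ sitting above it and tile it with flat downward couplings from Lemma~\ref{l:aplati}: each such trapezoid has base $K$ and height $\ell$ with $\ell/K\sim h(\log K)^{-A}$, hence is nearly a thin rectangle and fits inside the slowly widening region provided its base lies on the slice and its slightly wider top stays within $\T$. Because the downward trapezoids widen upward while $\T$ widens downward, they cannot reach the lateral sides; after tiling, Lemma~\ref{l:aplati} and a union bound lock every site of the slice with failure probability $e^{-L^{1+o(1)}}$, except for two strips of width $O(\ell)=O(L(\log L)^{-A})$ running along the left and right sides of $\T$.

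The main obstacle is exactly these two boundary strips, and this is where the one-dimensional geometry is essential. Each strip is a region of height $\Theta(L)$ but small width, bordered on its outer side by the fixed boundary condition $\chi$ (which is $\zeta$-independent by definition) and on its inner side by already-locked bulk sites (also $\zeta$-independent); the only channel through which $\zeta$-dependence can enter is the bounded-width interface near the top corner of $\T$. I would therefore iterate the same scheme inside each strip — viewing it as a narrower upward-trapezoid-type region of height still $\Theta(L)$ squeezed between two $\zeta$-independent walls — so that its width shrinks by a factor $\sim h(\log\cdot)^{-A}<1$ at each stage while its height, and hence the individual failure bound $e^{-L^{1+o(1)}}$, is preserved. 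After $O(\log L/\log\log L)$ stages the residual unlocked region has bounded width, at which point, since the number of sites within fixed distance of a lateral side does not grow with $L$, it carries only $O(1)$ relevant sites and can be locked directly by a type~II coupling (Lemma~\ref{l:couplage-dessus}), again with failure probability $e^{-L^{1+o(1)}}$.

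The constraint $M\le L/(\log L)^{\theta}$, with $\theta$ chosen from $A$ and $h$, guarantees that the initial slice is already narrow enough for the first tiling to leave only such controllable strips. Finally, since every step treats $\chi$ as fixed and all the invoked bounds are uniform in the configurations fed into them, a union bound over the two sides and the $O(\log L/\log\log L)$ stages yields a total non-coalescence probability $e^{-L^{1+o(1)}}$ that is uniform over $M$ and over $\chi$, as required.
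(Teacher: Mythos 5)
Your proposal has a genuine gap, and it is located exactly where the difficulty of this lemma lies: the sites close to the lateral sides of $\T$. The downward couplings of Lemmas \ref{l:debut} and \ref{l:aplati} concern the unconstrained dynamics, so a downward trapezoid used inside $\T$ must not touch the outer boundary; since a downward trapezoid of height $\ell'$ widens by $\ell'$ on each side going up while $\T$ narrows by $\ell'$ going up, its base must sit at horizontal distance at least $2\ell'$ from the lateral side. Hence a site at distance $d$ from the lateral side can only ever be covered by a downward trapezoid of height $O(d)$, whose non-coalescence probability from Lemma \ref{l:aplati} is only $e^{-d^{1+o(1)}}$. Your claim that iterating the scheme inside the strips ``preserves the individual failure bound $e^{-L^{1+o(1)}}$'' because the height of the \emph{strip} is $\Theta(L)$ is therefore false: what matters is the height of the downward trapezoids that fit at the current distance from the boundary, and this shrinks with the strip width (indeed by the factor $\sim h(\log\cdot)^{-A}$ you quote), so by the time the residual width is $(\log L)^{O(1)}$ the per-trapezoid failure probability has degraded to $e^{-(\log L)^{O(1)}}$, far from $e^{-L^{1+o(1)}}$. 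No amount of iteration fixes this; the obstruction is geometric. The closing step is also unsupported: to apply the type II coupling of Lemma \ref{l:couplage-dessus} to the residual $O(1)$-width column you need a total variation bound on the laws of its bottom configuration as a function of its top configuration, and the exponential ergodicity hypothesis \eqref{e:expo-ergod} says nothing about the dynamics confined to a thin column with boundary conditions. The tool that actually controls these near-boundary sites is the positive rates property \eqref{e:positive-rates-coupl}, which your proposal never invokes.

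For comparison, the paper does not try to lock any region near the boundary with probability $1-e^{-L^{1+o(1)}}$ in one shot. It slices $\T$ horizontally into $\Theta(L/\log L)$ upward trapezoids of height $t=O(\log L)$, and on each slice it accepts a merging probability (for a fixed \emph{pair} of top configurations) that is only $\kappa^{O(\log\log L)}/8=(\log L)^{-O(1)}$: the bulk of the slice is handled by three small type II downward couplings each succeeding with probability $1/2$, and the $O(\log\log L)$ sites adjacent to the boundary and between these trapezoids are forced to the common state $w$ using positive rates. The $\Theta(L/\log L)$ independent slices then give a pairwise failure probability $e^{-L/(\log L)^{\eta}}$, and a union bound over the $|\A|^{M+1}$ top configurations produces the final estimate; this union bound, not the geometry of the first tiling, is what forces the hypothesis $M\le L/(\log L)^{\theta}$. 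Your reduction to locking a single slice and your use of Lemma \ref{l:aplati} in the bulk are fine as far as they go, but the argument for the two boundary strips, which is the heart of the lemma, is missing.
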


\begin{proof}
 We define a coupling of the dynamics within an upward trapezoid $\T$ with height $L$ and top length $M$, for a given boundary condition $\chi$, assuming that $M \leq L$. Let $t=\pentsup{2(\log L)/b}$, and, for $\pentsup{(\log L)^3} \leq i \leq q$, where $q=\pentinf{L/t}$, consider the slice of $\T$ formed by the upward trapezoid $\T_i$ with height $t$ and base length $M+2it$ (see Fig. \ref{f:pile-trap}).

\begin{figure}
\centering
\includegraphics[width=8cm]{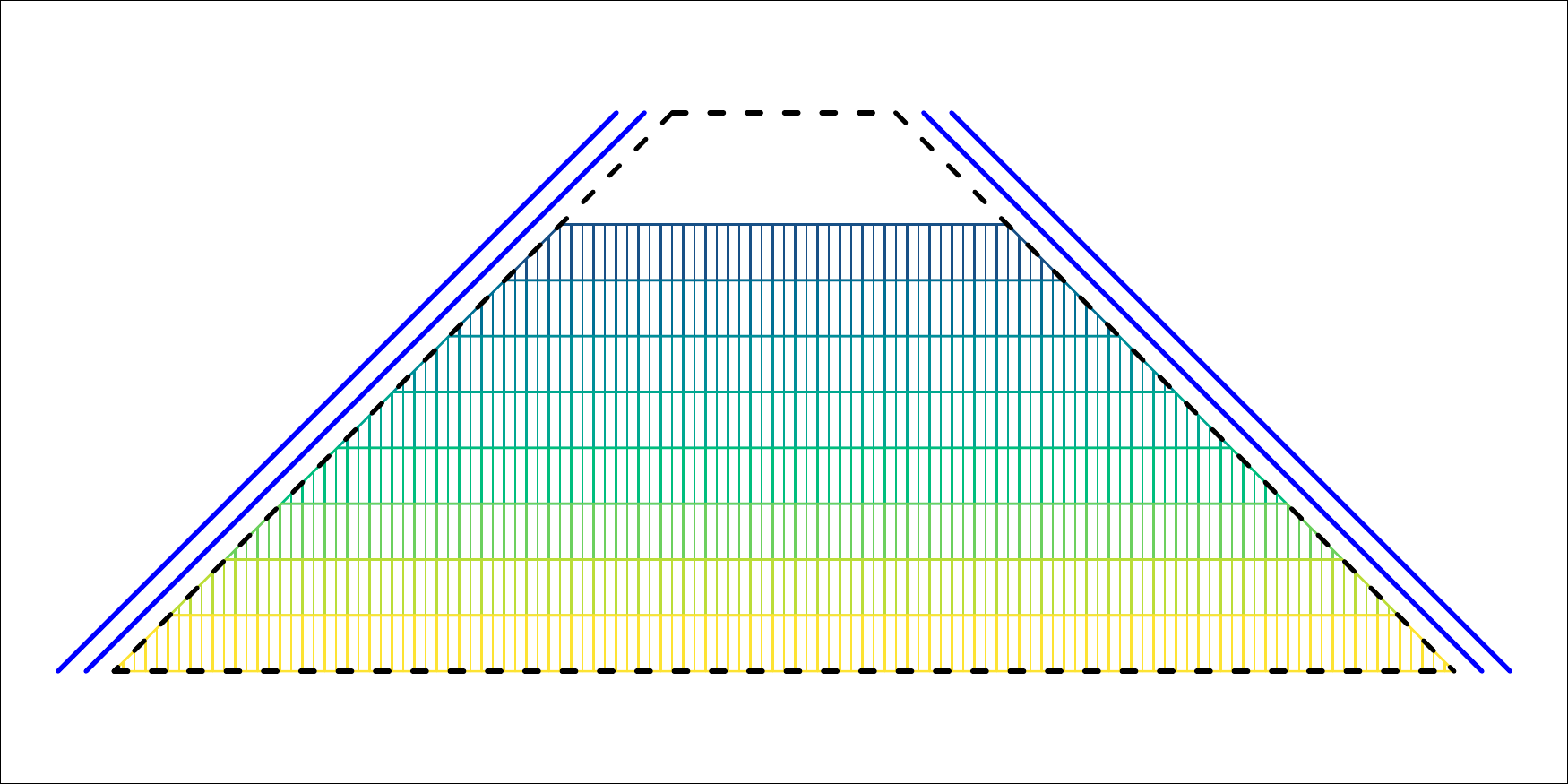}
\caption{Slicing of $\T$ by consecutive trapezoids $\T_i$, depicted in various colours. Boundary conditions are shown in blue.}
\label{f:pile-trap}
\end{figure}

\begin{figure}
\centering
\includegraphics[width=12cm]{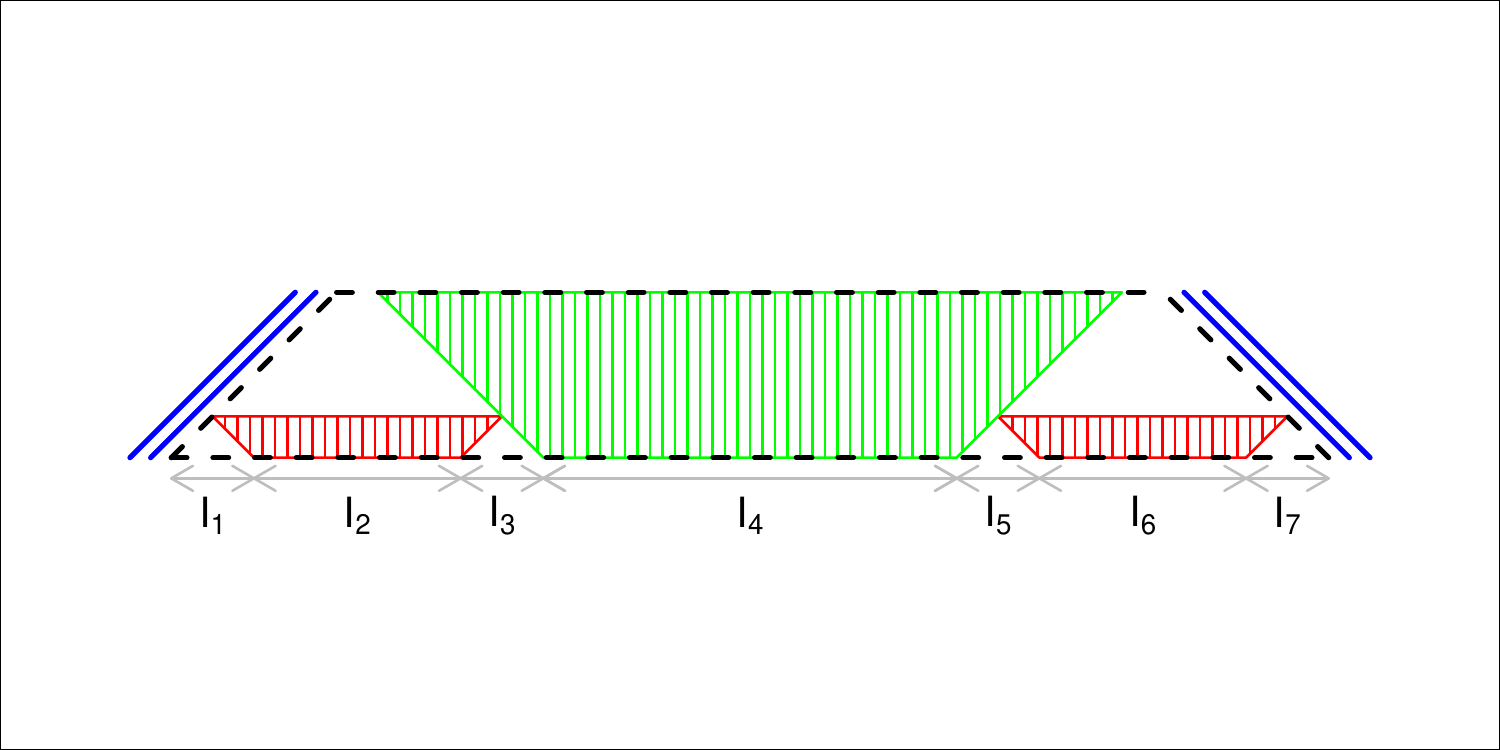}
\caption{Within a trapezoid $\T_i$, intervals $I_1,\ldots, I_7$, and trapezoids $\mathbf{T_{i,1}}$ (red), $\mathbf{T_{i,2}}$ (green), $\mathbf{T_{i,3}}$ (red). Boundary conditions are shown in blue.}
\label{f:empilement}
\end{figure}

For each $i$, we divide the base of $\T_i$ into seven consecutive intervals $I_{1},\ldots, I_{7}$  (from left to right), whose lengths $g_j$ are defined as follows: $g_{1}=g_{3}=g_{5}=g_{7}=k$, with $k=\pentsup{(6/b) \log \log L}_2$ (here $\pentsup{m}_2$ stands for the largest even integer number less than or equal to $m$), $g_{2}=g_{6}=\pentsup{(\log L)^2}$, $g_{4}=2it - (g_1+g_2+g_3+g_5+g_6+g_7)$. (Condition $i \geq (\log L)^3$ ensures that, for all large enough $L$, we have $g_4 \geq 0$.)

We then put two downward trapezoids $\T_{i,1}$ and $\T_{i,3}$ of height $k/2$ on top of $I_2$ and $I_6$ respectively, and a downward trapezoid $\T_{i,2}$ of height $t$ on top of $I_4$. Observe that these trapezoids do not intersect each other except on their boundaries, and, that for all large enough $L$, they do not touch the outer boundary of $\T$ (see Fig. \ref{f:empilement}).

We now define by induction the coupling within $\T$. To begin with, above $\T_{\pentsup{(\log L)^3}}$, we use the basic coupling. Then, assuming that the coupling has already been defined above $\T_i$, we do the following within $\T_i$. Outside $\T_{i,1} \cup \T_{i,2} \cup \T_{i,3}$, we use the basic coupling. Since these downward trapezoids do not touch the outer boundary, the dynamics within them do not involve the boundary condition. Moreover, their bases and heights have been chosen in such a way that, for all large $L$, $2a \cdot \mbox{\scriptsize base length } \cdot e^{-b \cdot  \mbox{\scriptsize height}} \leq 1/4$, say.

As a consequence, within $\T_{i,j}$ for $j=1,2,3$, Lemma \ref{l:couplage-dessus} provides a type II coupling such that, for any pair of configurations at the top of $\T_{i,j}$, the probability that they do not lead to the same configuration at the base of $\T_{i,j}$ is bounded above by $2 \cdot 1/4 = 1/2$. Since $\T_{i,j}$, for $j=1,2,3$ do not touch each other except on their boundaries, we may use these couplings independently within $\T_{i,1}, \T_{i,2},\T_{i,3}$. There remain less than $4k$ sites within  $I_1 \cup I_3 \cup I_5 \cup I_7$ that do not belong the the bases of $\T_{i,1}, \T_{i,2},\T_{i,3}$. Invoking the positive rates property of our PCA in conjunction with the basic coupling outside $\T_{i,1} \cup \T_{i,2} \cup \T_{i,3}$, see \eqref{e:positive-rates-coupl}, the probability to have every such site  in a certain state $w \in \A$, for every configuration at the top of $\T_i$, is bounded below by $\kappa^{4k}$, independently of what happens within  $\T_{i,1} \cup \T_{i,2} \cup \T_{i,3}$. As a result, the probability of having the same pair of configurations on the base of $\T_i$ is bounded below by $(1/2)^3 \cdot \kappa^{4k}$.

The coupling is now defined on the whole of $\T$. Starting from a pair of configurations $\zeta_1,\zeta_2$ at the top of $\T$, the probability that all of the $q-\pentsup{(\log L)^3}$ trapezoids $\T_i$ fail to produce the same pair of configurations on their base, is bounded above by $\left( 1-\kappa^{4 k}/8 \right)^{q-\pentsup{(\log L)^3}}$. As soon as  $\eta > 1+\log(1/\kappa) (24/b)$, this quantity is bounded above by $e^{-L/(\log L)^{\eta}}$ for all large enough $L$.

Since there are $|\A|^{M+1}$ distinct initial conditions, using the union bound exactly as in the proof of Lemma \ref{l:couplage-dessus}, the probability of non-coalescence of this coupling is bounded above by $|\A|^{M+1} e^{-L/(\log L)^{\eta}}$. Choosing  any $\theta >\eta$, the inequality $M \leq L/(\log L)^{\theta}$ yields the desired bound on the coalescence probability, and this bound is uniform over $\chi$. 
To get a coupling defined for every boundary condition, we use a version of the coupling just defined for every $\chi$, drawn independently over the various values of $\chi$.

\end{proof}

Now consider the following construction (see Fig. \ref{f:W}): starting from an integer $L$, set $K=2L$, and put side-by-side (from left to right) two downward trapezoids $\T_a, \T_c$ with height $L$ and top length $K$, and put in between  an upward trapezoid $\T_b$ with height $L-1$ and base length $K-2$. Since $K=2L$, these three trapezoids are in fact triangles.

\begin{lemma}\label{l:W}
For arbitrarily large $L$, there exists a $(K,L)$-coupling  within $\T=\T_a \cup \T_b \cup \T_c$ such that: 
\begin{itemize}
\item[\textbullet] The dynamics within $\T_a$ and $\T_c$ are given by two i.i.d. $(K,L)-$couplings.
\item[\textbullet] The dynamics within $\T_b$ is given by an $(M,L-1)$ coupling with $M=0$, independent from the above two $(K,L)-$couplings.
\item[\textbullet] The non-coalescence probability of the overall coupling is bounded above by $e^{-L^{1+o(1)}}$.
\end{itemize}
\end{lemma}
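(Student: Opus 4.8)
The plan is to realize the three triangles as an (essentially disjoint) tiling of $\T$ and to glue independent couplings on the pieces using the restriction properties, then to propagate coalescence from piece to piece. First I would record the geometry: with $K=2L$ the enclosing region $\T=\T_a\cup\T_b\cup\T_c$ is a downward $(K,L)$-trapezoid, at every time level $\geq 1$ its slice is the disjoint union of the corresponding slices of $\T_a,\T_b,\T_c$, and $\mbox{base}(\T)$ splits as the apex of $\T_a$, then $\mbox{base}(\T_b)$, then the apex of $\T_c$. Since $\T_a$ and $\T_c$ are \emph{downward} triangles their internal dynamics are causally closed, so by Lemma \ref{l:restriction-downward} I may run on them two independent, identically distributed couplings without reference to the rest; the sites of $\T_a,\T_c$ sitting on $\partial_{+}\T_b$ then supply the boundary condition $\chi$ for $\T_b$, and by Lemma \ref{l:restriction-upward} I may realize the dynamics inside $\T_b$ (started from its single apex value, itself computed from the top of $\T$ by the basic coupling) by an independent copy of the $M=0$ coupling of Lemma \ref{l:bc-coupl}, whose law is $G_\chi(\cdot,\T_b)$. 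Gluing these gives, for each top configuration $\zeta$, exactly the law $G(\zeta,\T)$, so the result is a genuine $(K,L)$-coupling and the structural requirements on the three pieces hold by construction; it remains to bound the non-coalescence probability.

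For $\T_a,\T_c$ I would not couple the whole triangle at once, since its height-to-width ratio is of order one, far from the flat regime of Lemma \ref{l:aplati}. Instead I would fit at the top of each triangle a flat downward sub-trapezoid of height $t^\ast$ and base width $\approx 2L$, run there the coupling of Lemma \ref{l:aplati}, and complete the coupling below level $t^\ast$ by the basic coupling. Taking the exponent $A$ of Lemma \ref{l:aplati} to satisfy $A>\theta$, where $\theta$ is the constant of Lemma \ref{l:bc-coupl}, forces $t^\ast\sim 2hL(\log L)^{-A}=o\!\left(L(\log L)^{-\theta}\right)$, and Lemma \ref{l:aplati} makes the full width of $\T_a$ at level $t^\ast$ coalesce with probability $1-e^{-L^{1+o(1)}}$. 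Once a full slice has coalesced, the basic coupling keeps everything strictly below it coalesced, so on this event $\T_a$ is entirely coalesced below level $t^\ast$; in particular its apex is locked and the part of $\partial_{+}\T_b$ carried by $\T_a$ becomes independent of $\zeta$ at every level $\geq t^\ast$. The same holds for $\T_c$.

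It remains to coalesce $\mbox{base}(\T_b)$. On the event above, the boundary condition $\chi$ fed to $\T_b$ is fixed (independent of $\zeta$) at all levels $\geq t^\ast$, so the only dependence on $\zeta$ still reaching the bottom enters through the configuration of $\T_b$ at level $t^\ast$, a window of width $2t^\ast-1$. I would then control the $\T_b$-coupling below level $t^\ast$: this lower region is an upward trapezoid of height $\sim L$ and top width $2t^\ast-1\leq L(\log L)^{-\theta}$ (this is exactly where $A>\theta$ is used), so the mechanism of Lemma \ref{l:bc-coupl}, applied with the now-fixed boundary, makes $\mbox{base}(\T_b)$ constant over all inputs at level $t^\ast$, hence over all $\zeta$, with probability $1-e^{-L^{1+o(1)}}$. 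Since $\mbox{base}(\T)$ is the disjoint union of the two apices and $\mbox{base}(\T_b)$, all of which are then locked, a union bound over the three failure events yields the non-coalescence bound $e^{-L^{1+o(1)}}$.

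The hard part is precisely this coupling of the boundary with the interior. The boundary condition of $\T_b$ does \emph{not} coalesce in the thin top strip of height $t^\ast$, where $\T_a,\T_c$ have not yet coalesced, so Lemma \ref{l:bc-coupl} cannot be applied to $\T_b$ as a whole with a single fixed boundary; one must first quantify how quickly $\T_a,\T_c$ coalesce and only then defer the boundary-dependent argument to the part of $\T_b$ below level $t^\ast$. Concretely this forces the scale-compatibility condition $A>\theta$, and it requires checking that the restriction of the full-triangle $\T_b$-coupling below the random level $t^\ast$ is still governed by the slab construction of Lemma \ref{l:bc-coupl}: it suffices that one slab of that construction lying below $t^\ast$ succeeds, since a successful slab resets its base to a configuration independent of everything above it, washing out both the varying part of $\chi$ above $t^\ast$ and the input at level $t^\ast$. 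As $t^\ast=o(L)$ there are $\gtrsim L/\log L$ such slabs, each succeeding with probability bounded below by a fixed negative power of $\log L$, which is exactly what produces the $e^{-L^{1+o(1)}}$ control.
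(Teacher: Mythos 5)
Your overall architecture matches the paper's proof of Lemma \ref{l:W} almost exactly: flat downward trapezoids from Lemma \ref{l:aplati} occupying the top $t^\ast\sim 2hL(\log L)^{-A}$ levels of $\T_a$ and $\T_c$ (the paper's $\T_1,\T_2$, with the basic coupling below them), an upward sub-trapezoid of $\T_b$ of height $\sim L$ and top width $\sim 2t^\ast\leq L/(\log L)^{\theta}$ handled by Lemma \ref{l:bc-coupl} (the paper's $\T_3$), the scale-compatibility condition $A>\theta$, and a union bound over the three failure events. Your third paragraph --- fix the boundary of the lower sub-trapezoid on the coalescence event of $\T_a,\T_c$ and invoke Lemma \ref{l:bc-coupl} there as a black box --- is precisely the paper's argument.

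Your final paragraph, however, replaces that black-box invocation with an incorrect mechanism, and this is a genuine flaw in the step you yourself single out as the hard one. First, a ``successful slab'' in the construction of Lemma \ref{l:bc-coupl} does \emph{not} reset its base to a configuration independent of everything above it: the type II couplings inside a slab only merge a given \emph{pair} of top configurations with probability at least $1/2$ each, and full coalescence of a single slab over all of its inputs (a window of width up to $\sim 2it$) would require a union bound over exponentially many configurations and has negligible probability. Coalescence in Lemma \ref{l:bc-coupl} is obtained by asking that, for each fixed pair, \emph{some} slab among the $\sim L/\log L$ available ones succeed, giving failure probability $\left(1-\kappa^{4k}/8\right)^{q-\pentsup{(\log L)^3}}$, followed by a union bound over the $|\A|^{M+1}$ inputs; this is exactly why the hypothesis $M\leq L/(\log L)^{\theta}$ is needed, and ``one successful slab suffices'' is not how the bound arises. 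Second, and more importantly, no internal event of the coupling can ``wash out the varying part of $\chi$'': the $(M,L)$-coupling of Lemma \ref{l:bc-coupl} uses \emph{independent} randomness for distinct boundary conditions, so even simultaneous coalescence for every $\chi$ would leave the coalesced base value depending on $\chi$, hence on $\zeta$. The region to which Lemma \ref{l:bc-coupl} is applied must therefore have its \emph{entire} outer boundary below the coalescence level of $\T_a,\T_c$: one should use the basic coupling on the thin upper part of $\T_b$ and instantiate a fresh Lemma \ref{l:bc-coupl} coupling only on the lower sub-trapezoid (the paper's $\T_3$, whose outer boundary sits inside $\mathscr{T}_1\cup\mathscr{T}_2$), rather than restricting a single $M=0$ coupling defined on all of $\T_b$. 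With that correction your argument coincides with the paper's.
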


\begin{proof}

Remember the constant $\theta$ from Lemma \ref{l:bc-coupl}, and let $A>\theta$. Now consider an integer $L_1$ (which can be chosen to be arbitrarily large) to which we apply Lemma \ref{l:aplati}, yielding a downward $(K_1,L_1)-$coupling. Note that, by choosing $q$ even in the proof of Lemma \ref{l:self-sim}, we may assume that $K_1$ is an even number, and let $L=L_1+K_1/2$ and $K=2L$. Then let $L_2=L-L_1$, and $M_2=2L_1-2$. For large $L_1$, we have that 
$K_1 \sim h^{-1} L_1 (\log L_1)^A$. We deduce that $L_2 \sim (h^{-1}/2) L_1 (\log L_1)^A$, and $\log L_2 \sim \log L_1$, so that $L_2/(\log L_2)^{\theta} \sim (h^{-1}/2) L_1 (\log L_1)^{A-\theta}$. Since $M_2 = 2 L_1-2$ and $A > \theta$, we see that, for all large enough $L_1$,  $M_2 \leq L_2/(\log L_2)^{\theta}$ so that we may use Lemma \ref{l:bc-coupl} to provide an upward $(M_2,L_2)-$coupling.

Now  (see Fig. \ref{f:W}) put side-by-side two trapezoids downward $\T_1$, $\T_2$ with height $L_1$ and base length $K_1$. Then insert between $\T_1$ and $\T_2$ an upward trapezoid $\T_3$ whose top has the same ordinate as the base of $\T_1$, $\T_2$, with height $L_2$ and base length $K-2$ (so that the top-length is $M_2$). Next, draw two triangles   $\mathscr{T}_1$ and  $\mathscr{T}_2$ just below respectively $\T_1$ and $\T_2$, so that the top of $\mathscr{T}_1$ (resp. $\mathscr{T}_2$) coincides with the base of $\T_1$ (resp. $\T_2$). Finally, let $\T_a=\T_1 \cup \mathscr{T}_1$, $\T_c=\T_2 \cup \mathscr{T}_2$, and let $\T_b$ denote the triangle located between $\T_a$ and $\T_c$, whose boundary is at horizontal distance $1$ from these.

\begin{figure}
\centering
\includegraphics[width=12cm]{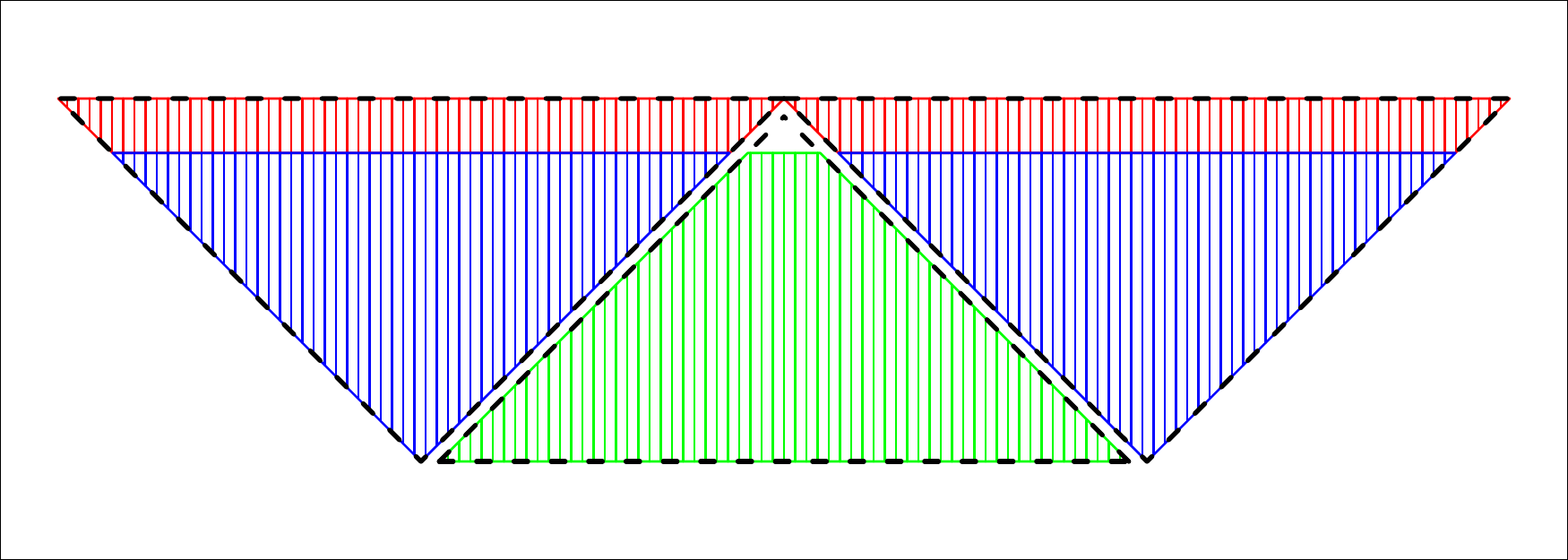}
\caption{Here, the boundaries of $\T_a$ (left), $\T_b$ (middle), $\T_c$ (right) are drawn with dashed lines. Then $\T_1$ (left) and $\T_2$ (right) are drawn in red, while $\T_3$ is drawn in green,  and $\mathscr{T}_1$ (left) and  $\mathscr{T}_2$ (right) are drawn in blue.}
\label{f:W}
\end{figure}
 
 We use the downward $(K_1,L_1)-$coupling provided by Lemma \ref{l:aplati}, independently within $\T_1$ and $\T_2$.
Within  $\mathscr{T}_1$ and  $\mathscr{T}_2$, and also within the triangle located between $\T_1$ and  $\T_2$, we use the basic coupling. Finally, within  $\T_3$, we use the upward $(M_2,L_2)-$coupling from Lemma \ref{l:bc-coupl}, independently from the couplings used within $\T_1$ and $\T_2$. 

The outer boundary of $\T_3$ is included in $\mathscr{T}_1 \cup \mathscr{T}_2$, so the boundary values for the dynamics in $\T_3$ are determined by the couplings we have already defined.

When there is coalescence within $\T_1$ and  $\T_2$, only one (random) boundary condition appears at the outer boundary of $\T_3$, depending solely on the coupling within $\T_a$ and $\T_c$, so that, when in addition there is coalescence within $\T_3$ for this specific boundary condition, there is coalescence within $\T$. Since the coupling within $\T_3$ is independent from the couplings within $\T_a$ and $\T_b$, and since the bound on the non-coalescence probability of the coupling provided by Lemma \ref{l:bc-coupl} is uniform with respect to the boundary condition, the probability not to have coalescence, conditional upon the fact that there is coalescence  within $\T_1$ and  $\T_2$, is bounded above by $e^{-L_2^{1+o(1)}}$. Since the probability not to have coalescence within  $\T_1$ or within $\T_2$ is also bounded above by  $e^{-L_1^{1+o(1)}}$, and since both $\log L_1 \sim \log L$ and $\log L_2 \sim \log L$, we conclude that the overall probability of non-coalescence is bounded above by $e^{-L^{1+o(1)}}$. 
\end{proof}

We are now ready to prove Theorem \ref{t:CFTP}.

\begin{proof}[Proof of Theorem \ref{t:CFTP}]

Remember the definition of $\T$, $\T_a$, $\T_b$ from Lemma \ref{l:W}. We start with a triangle $\T_a$ whose top is $ \llbracket -K,0 \rrbracket  \times \{ L \} $, so that the base of $\T_b$ is $ \llbracket -(L-1),(L-1) \rrbracket \times \{0 \} $, and tile the whole lattice $\Z \times (-\N)$ by translating $\T_a$ and $\T_b$ with vectors of the form $\lambda_1 (2L,0) + \lambda_2 (0,L)$, where $\lambda_1 \in \Z$ and $\lambda_2 \in -\N$. We then define a flow $\Phi_{t_n}^{t_{n+1}}$ for $t_n=-n \cdot L$ by using i.i.d. copies of the couplings provided by Lemma \ref{l:W}, respectively for every copy of $\T_a$, and every copy of $\T_b$. Properties (ii) and (iii) of the theorem are then direct consequences of the definition. 

We now prove (i). Given $x \in \Z$, let $z$ denote  the element of $\Z$ of the form $2kL$ closest to $x$, where $k \in \Z$, and define $I(x)=\llbracket z-2L,z+2L \rrbracket$. For $n \geq 0$, we let $J(x,n)=\emptyset$ if there is coalescence within 
$\T+(z,-Ln)$, and $J(x,n)=I(x)$ otherwise. The sets $J(x,n)$ have been defined in such a way that, for all $m \geq n+1$, 
\begin{equation}\label{e:recure}  \mbox{$\pi_{J(x,n)} \circ \Phi_{t_{n+1}}^{t_m}$ {\scriptsize is a constant function}} \Rightarrow  \mbox{$\pi_{x} \circ \Phi_{t_{n}}^{t_m}$ {\scriptsize is a constant function} },\end{equation}
with the convention that $\Phi_{t_{n+1}}^{t_{n+1}}$ is the identity function.

We then define a random sequence $(P_n)_{n \geq 0}$ of finite subsets of $\Z$, in the following way. We start with $P_0= \{ x \}$.  
Then, assuming $P_0,\ldots, P_n$ have already been defined, we let $P_{n+1} = \bigcup_{x \in P_n} J(x,n)$. 

One checks by induction using \eqref{e:recure} that, if $P_n = \emptyset$, then $\pi_x \circ \Phi_{t_0}^{t_{n}}$ is a constant function, so that we have the bound
$\P(T_x > nL) \leq  \P(P_n \neq \emptyset)$.

Now observe that $| P_{n+1} |\leq \sum_{y \in \Z} \un(y \in P_n) | J(y,n) |$. Moreover, for fixed $y$, $\un(y \in P_n)$ is measurable with respect to $\sigma(\Phi_{t_{k-1}}^{t_{k}}, 1 \leq k \leq n)$, while $| J(y,n) |$ is measurable with respect to $\sigma(\Phi_{t_n}^{t_{n+1}})$. Since the random functions $(\Phi_{t_{k-1}}^{t_{k}})_{k \geq 1}$ form an independent sequence,  $\un(y \in P_n)$ and $| J(y,n) |$ are independent, so we have that  $\E ( | P_{n+1} |) \leq  \sum_{y \in \Z} \P(y \in P_n)  \E(| J(y,n) |)$.
 
In view of the definition of $J(y,n)$, we see that $\E(|J(y,n) |) = (4L+1) \cdot \P(\mbox{non-coalescence in $\T$})=\rho$. As a consequence, we deduce that $\E ( | P_{n+1} |) \leq  \rho  \cdot \sum_{y \in \Z} \P(y \in P_n)  = \rho  \cdot \E(| P_n |)$. Iterating this inequality, we deduce that $\E(| P_n |) \leq \rho^{n}$.

Since the non-coalescence probability is bounded above by $e^{-L^{1+o(1)}}$, $\rho$ can be made arbitrarily small by choosing a large enough value of $L$, and we indeed assume that $\rho$ is $<1$. Using the Markov inequality and the fact that $| P_n |$ is an integer number, we have the following sequence of inequalities, which proves (i):
$$\P(T_x > nL) \leq \P(P_n \neq \emptyset) = \P(| P_n | > 0) =  \P(| P_n | \geq 1) \leq \E(| P_n |) \leq \rho^{n}.$$

\end{proof}

\begin{proof}[Proof of Corollary \ref{c:perturb}]

Assume that $\mathcal{K}$ is a transition kernel defining an exponentially ergodic PCA with positive rates, and let $\mathcal{K}'$ denote a transition kernel distinct from $\mathcal{K}$. Given $\epsilon \in ]0,1[$, assume that $\mathcal{K}'$ is close enough to $\mathcal{K}$ so that, for any $\mathbf{v} \in \A^{\{-1,0,1\}}$ and $w \in \A$, $\mathcal{K}'(\mathbf{v},\{w\}) \geq (1-\epsilon)\mathcal{K}(\mathbf{v},\{w\})$. Letting $\mathcal{K}''(\mathbf{v},\{w\})=\frac{1}{\epsilon}(\mathcal{K}'(\mathbf{v},\{w\})-(1-\epsilon)\mathcal{K}(\mathbf{v},\{w\}))$, we see that $\mathcal{K}''$ is a transition kernel. 

We now reuse the tiling of $\Z \times (-\N)$ with translated copies of $\T_a$ and $\T_b$ used to prove Theorem \ref{t:CFTP}, and define a coupling for the dynamics of $\mathcal{K}'$ as follows. Within each copy of $\T_a$, declare each site in  $\T_a \setminus \mbox{top}(\T_a)$ to be blue with probability $1-\epsilon$ and red with probability $\epsilon$, independently for each site. If all sites are blue, we use within $\T_a$ the coupling defined for the $\mathcal{K}-$dynamics in the proof of Theorem \ref{t:CFTP}. If at least one site is red, we use a version of the basic coupling where red sites use $\mathcal{K}''$ while blue sites use $\mathcal{K}$. A similar construction is done for each copy of $\T_b$. 

We now redo the construction of the sets $(P_n)_{n \geq 0}$ used in the proof of Theorem \ref{t:CFTP} with the following modification: $J(x,n)=\emptyset$ if there is coalescence within 
$\T+(z,Ln)$ and all sites in $\T+(z,Ln)$ are blue, and $J(x,n)=I(x)$ otherwise. As a consequence, $$\E(|J(y,n) |) \leq  (4L+1) \cdot \left((1-\epsilon)^{m} p + (1-(1-\epsilon)^m) \right),$$ where $m=| \T \setminus \mbox{top}(\T)|$
and $p=\P(\mbox{non-coalescence of the $\mathcal{K}-$dynamics in $\T$})$. For large enough $L$, we have that $(4L+1) p <1$. For such an $L$, noting that $m$ depends only on $L$ and not on $\epsilon$, we see that, for all $\epsilon$ small enough so that $\E(|J(y,n) |)<1$, the same argument as in the proof of Theorem  \ref{t:CFTP} leads to the conclusion that the coalescence time $T'_x$ for the $\mathcal{K}'-$dynamics has a finite exponential moment uniformly bounded over $x$. 
\end{proof}

\bibliographystyle{siam}
\bibliography{CFTP-PCA}

\end{document}